\documentclass[12pt]{amsart}
\usepackage[top=1in, bottom=1in, left=1in, right=1in]{geometry}
\usepackage{amsfonts}
\usepackage{amsmath}
\usepackage{comment}
\usepackage{amssymb}
\usepackage{float}
\usepackage{graphicx} 
\usepackage{bbm}
\usepackage{comment} 
\usepackage{mathrsfs}
\numberwithin{equation}{section}
\usepackage{times}
\usepackage{tikz, pgfplots}
\pgfplotsset{compat=1.18}
\usetikzlibrary{positioning}
\usepackage{siunitx}
\usepackage{color}
\usetikzlibrary{angles}
\usetikzlibrary{arrows.meta}
\usepackage{comment}
\usepackage{xcolor}
\usepackage{mathtools}
\usepackage{bm}
\usepackage{esvect}
\usepackage{hyperref}

\hypersetup{
    colorlinks = true,
linkcolor={black},
urlcolor={blue},
citecolor={blue},    
urlcolor = {blue},
citebordercolor = {0.33 .58 0.33},
 linkbordercolor = {0.99 .28 0.23},
 breaklinks=true}
 
\newcommand{\R}{\mathbb{R}}

\newcommand{\Z}{\mathbb{Z}}

\newtheorem{thm}{Theorem}[section]

\newtheorem{prop}[thm]{Proposition}
\newtheorem{coro}[thm]{Corollary}
\newtheorem{lem}[thm]{Lemma}
\newtheorem{rem}[thm]{Remark}

\theoremstyle{remark}

\usepackage{geometry}
\title{Self-intersection Points of Billiard Trajectories in a Square with Small Pockets}
\author{Anji Dong, Colin Edsey, Shuta Iwai, Alexandru Zaharescu}

\address{
Anji Dong: Department of Mathematics,
University of Illinois Urbana-Champaign,
Altgeld Hall, 1409 W. Green Street,
Urbana, IL, 61801, USA}
\email{anjid2@illinois.edu}

\address{
Colin Edsey: Department of Mathematics,
University of Illinois Urbana-Champaign,
Altgeld Hall, 1409 W. Green Street,
Urbana, IL, 61801, USA}
\email{csedsey2@illinois.edu}

\address{
Shuta Iwai: Department of Mathematics,
University of Illinois Urbana-Champaign,
Altgeld Hall, 1409 W. Green Street,
Urbana, IL, 61801, USA}
\email{siwai2@illinois.edu}

\address{
Alexandru Zaharescu: Department of Mathematics,
University of Illinois Urbana-Champaign,
Altgeld Hall, 1409 W. Green Street,
Urbana, IL, 61801, USA}
\email{zaharesc@illinois.edu}

\begin{document}
\setcounter{tocdepth}{1}
\keywords{Billiards, Farey fractions, visible points, billiard trajectory}
\subjclass{Primary:11B57. Secondary:11K99.}
\begin{abstract}
   We study the self-intersections of billiard trajectories in a square with small pockets of size $\varepsilon$ removed from its four corners. In particular, we establish an asymptotic formula  for the $r$-th moment of the number of self-intersection points for each positive integer $r$, as well as the distribution of such points.
\end{abstract}

\maketitle

\section{Introduction and main results}
Billiard systems in polygonal domains have been studied extensively over the past several decades. The behavior of such trajectories depends strongly on the arithmetic properties of their slopes. For classical results in this direction, see the works of Veech \cite{Veech1989, Veech1992}. These systems  provide a rich interplay between geometry, dynamics, and number theory. In particular, the study of billiard trajectories has been closely related to Farey sequences, which play an important role in Diophantine approximation and are also closely connected to the Riemann zeta function. Early works in this direction include, for example, those of Boca, Cobeli, and the fourth author \cite{BocaCobeliZaha2000}, as well as Bunimovich and Dettmann \cite{BunimovichDettmann}. More recent methods have established ergodic and statistical properties of the periodic two-dimensional Lorentz gas, which was introduced by Lorentz \cite{Lorentz1905} to study the dynamics of electrons in metals. We refer the reader to the works of Alkan, Ledoan, Selvakumaran, Boca, Gologan, and the fourth author \cite{AlkanLedoanZaha, Alkan2005, BocaGologanZaha, BocaGologanZaha2}, as well as P\`ene \cite{FranPene2014} for these and further developments. 

In the present paper, we consider a billiard table in the shape of a unit square, denoted by $\Omega$, with small triangular pockets of size $\varepsilon$ removed from the four corners, as shown in Figure \ref{fig:setting} below. A point-like particle is launched from the bottom-left corner with an initial angle  $\theta\in [0,\pi/2]$ measured from the horizontal axis. Our primary goal is to study the number and  distribution of self-intersection points of the trajectory before the particle enters one of the pockets. A self-intersection point occurs when the trajectory visits the same point in $\Omega$ at two distinct times; that is, there exist $t_1\neq t_2$ such that $P(t_1)=P(t_2)$, where $P(t)$ denotes the position of a particle at time $t$. 

To capture the arithmetic structure of the problem, we unfold the billiard table to $\R^2$. Observe that the triangular pockets correspond to a diamond shape of length $\varepsilon$ removed from each integer lattice point in the plane. Therefore, in the unfolded plane, the pockets become
\begin{align*}
    \mathcal{C} = C_{\varepsilon} + \mathbb{Z}^{2},
\end{align*}
where $C_{\varepsilon} = \{(x,y): |x|+|y| \le \varepsilon\}$. This formulation is the same as in \cite{AlkanLedoanZaha, BocaCobeliZaha2000, BocaGologanZaha, BocaGologanZaha2}, for example. The region of the unfolded plane, free of these pockets, is
\begin{align*}
    \Omega= \{x \in \mathbb{R}^{2}/2\mathbb{Z}^{2}: x \notin \mathcal{C} \},
\end{align*}
where the quotient by $2\Z^2$ keeps track of the orientation of the path. 
As $\varepsilon\rightarrow0$, upon reflections each possible trajectory can be transformed into a line segment starting from the origin and ending at an integer lattice point $(m,n)$. Moreover, we must have gcd$(m,n)=1$; otherwise, the trajectory terminates at the first integer point it reaches, namely $(m/\text{gcd}(m,n), n/\text{gcd}(m,n))$. Such points with coprime coordinates are called \textit{visible points}. Thus, there exists a one-to-one correspondence between a trajectory with rational slope starting from the origin and a visible point in $\R^2$.

\begin{figure}[H]\label{fig:setting}
\centering
    \begin{tikzpicture}
    \draw[black, very thick](1,0)--(4,0)
    (0,1)--(0,4)
    (5,1)--(5,4)
    (1,5)--(4,5);
    \draw [dotted, very thick] (0,1) -- (1,0)
    (0,0) -- (1,0) node[midway, below]{$\varepsilon$}
    (0,0) -- (0,1) node[midway, left]{$\varepsilon$}
    (4,0) -- (5,0) node[midway, below]{$\varepsilon$}
    (5,0) -- (5,1) node[midway, right]{$\varepsilon$}
    (4,0) -- (5,1)
    (0,4) -- (1,5)
    (0,4) -- (0,5) node[midway, left]{$\varepsilon$}
    (0,5) -- (1,5) node[midway, above]{$\varepsilon$}
    (4,5) -- (5,5) node[midway, above]{$\varepsilon$}
    (4,5) -- (5,4)
    (5,4) -- (5,5) node[midway, right]{$\varepsilon$};
    
    \draw[dotted,thick, -{Stealth[scale = 1]}] (0,0) -- (5,1.5);
    \draw[dotted,thick, -{Stealth[scale = 1]}] (5,1.5) -- (0,3);
    \draw[dotted,thick, -{Stealth[scale = 1]}] (0,3)--(5,4.5);
    \coordinate (A) at (5,0);
    \coordinate (B) at (0,0);
    \coordinate (C) at (5,1.5);
    \draw[thick] (1.1,0) arc[start angle=0, end angle=16.7, radius=11mm];
    \node[font = \footnotesize] at (1.3,0.2) {$\theta$};
    \end{tikzpicture}
    \caption{The trajectory of the particle}
\end{figure}

The set of visible points is closely connected to Farey sequences. Indeed, a visible point $(q,a)$ in the first quadrant corresponds uniquely to the reduced fraction $a/q$, and the ordering of these fractions by slope is precisely captured by the Farey sequence $\mathcal{F}_Q$, which is defined as 
\begin{align}
\mathcal{F}_Q := \left\{\frac{a}{q}:1\le a\le q\le Q, \ (a,q)=1,\ Q\in \Z^{\ge 2}\right\}.\label{def:Farey sequence of order Q}
\end{align}
For our problem, the relevant order is  $Q=[1/\varepsilon ]$, where $[x]$ denotes the integer part of $x$. The following theorem provides an explicit count of self-intersection points in terms of pairs of consecutive Farey fractions $a/q< a'/q'$ in $\mathcal{F}_Q$.
\begin{thm}\label{thm:asymptotic of self-intersection count}
     Let $\mathcal{F}_Q$ be the Farey sequence of order $Q = [1/\varepsilon]$, and $\theta$ be the initial angle. Suppose~$a/q < a'/q'$ are consecutive fractions in $\mathcal{F}_Q$ such that $\tan\theta\in [a/q, \ a'/q')$. Denote by $I_\varepsilon(\theta)$ the number of self-intersection points before the trajectory enters a triangular pocket with size~$\varepsilon$ and initial angle $\theta$. Then,
     \begin{align*}
    I_{\varepsilon}(\theta) = 
        \begin{cases}
            \begin{cases}
            \frac{(a-1)(q-1)}{2} & \text{if $\tan(\theta) \in [\frac{a}{q}, \frac{a}{q-\varepsilon})$}\\
            \frac{(a-1)(q-1)}{2} & \text{if $\tan(\theta) \in [\frac{a}{q-\varepsilon}, \frac{a+\varepsilon}{q})$}\\
            \frac{(a'-1)(q'-1)}{2} & \text{if $\tan(\theta) \in [\frac{a+\varepsilon}{q}, \frac{a'}{q'})$}\\
            \end{cases} 
        & \text{if $q<q'$,}
        \\
            \begin{cases}
            \frac{(a-1)(q-1)}{2} & \text{if $\tan(\theta) \in [\frac{a}{q}, \frac{a'-\varepsilon}{q'})$}\\
            \frac{(a'-1)(q'-1)}{2} & \text{if $\tan(\theta) \in [\frac{a'-\varepsilon}{q'}, \frac{a'}{q'})$}\\
            \end{cases} 
            & \text{if $q>q'$ and $t_{S'}\le t_W$,}
        \\
        \begin{cases}
            \frac{(a-1)(q-1)}{2} & \text{if $\tan(\theta) \in [\frac{a}{q}, \frac{a}{q-\varepsilon})$}\\
            \frac{(a-1)(q-1)}{2} & \text{if $\tan(\theta) \in [\frac{a}{q-\varepsilon}, \frac{a'-\varepsilon}{q'})$}\\
            \frac{(a'-1)(q'-1)}{2} & \text{if $\tan(\theta) \in [\frac{a'-\varepsilon}{q'}, \frac{a'}{q'})$}\\
            \end{cases} 
            & \text{if $q>q'$ and $t_{S'}> t_W$},
        \end{cases}
    \end{align*}
   where $t_{S'} = \frac{a'-\varepsilon}{q'}$ and $t_W = \frac{a}{q-\varepsilon}$. 

    Moreover, the multiplicity of each self-intersection point is exactly one (meaning that the trajectory goes through each self-intersection point exactly twice).  
\end{thm}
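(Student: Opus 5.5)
The plan is to reduce the count to a lattice-point count for the unfolded segment that ends at the first pocket hit, and then to decide which pocket that is.

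\textbf{Unfolding.} I unfold the billiard, so the trajectory becomes the ray $y=x\tan\theta$ from the origin. It is stopped at the first diamond $C_\varepsilon+(m,n)$ it meets. With $Q=[1/\varepsilon]$, I first check that this diamond belongs to $(m,n)=(q,a)$ or $(m,n)=(q',a')$. Every lattice point $(m,n)$ with $m\le Q$ and slope strictly between $a/q$ and $a'/q'$ is excluded because no such fraction lies in $\mathcal{F}_Q$. Lattice points with $m>Q$ are excluded because, by $aq'-a'q=-1$, such a point lies at vertical distance larger than $\varepsilon(1+\tan\theta)$ from the ray before the ray reaches $(q,a)$ or $(q',a')$.

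\textbf{Which diamond is hit first.} The ray meets the diamond $|x-q|+|y-a|\le\varepsilon$ exactly when $\tan\theta$ lies between $(a-\varepsilon)/q$ and $a/(q-\varepsilon)$ on one side, and up to $(a+\varepsilon)/q$ on the other. The relevant thresholds on our side of $a/q$ are therefore $t_W=a/(q-\varepsilon)$ and $(a+\varepsilon)/q$. Similarly, the diamond at $(q',a')$ is met once $\tan\theta\ge t_{S'}=(a'-\varepsilon)/q'$. When $\tan\theta$ lies in both windows, the diamond met first is the one with the smaller abscissa. This is $(q,a)$ if $q<q'$ and $(q',a')$ if $q>q'$.

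Comparing the thresholds gives exactly the case split of the statement. When $q<q'$, the inequality $t_{S'}$ versus $(a+\varepsilon)/q$ is forced by $a'q-aq'=1$ together with $q'\le Q$, so only the displayed three subintervals occur. When $q>q'$, the subcases are distinguished by whether $t_{S'}\le t_W$. In every subinterval, the statement then reduces to the following claim: the folded image of the segment from $(0,0)$ toward a visible point $(m,n)$, truncated at its diamond, has exactly $(m-1)(n-1)/2$ self-intersections, each of multiplicity one.

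\textbf{Counting self-intersections.} Parametrize the segment by $P(t)=(tm,tn)$ for $t\in[0,1]$. The folding map $\R^2\to[0,1]^2$ identifies two points exactly when each coordinate agrees up to sign modulo $2$. So $P(t)$ and $P(s)$ with $t\ne s$ coincide after folding only in these cases.
\begin{itemize}
\item \emph{Same sign in both coordinates.} This gives $(t-s)m,(t-s)n\in2\Z$. Since $\gcd(m,n)=1$ this forces $t-s\in2\Z$, which is impossible.
\item \emph{Plus sign in both coordinates for $t+s$.} This gives $t+s\in2\Z$, which is impossible for $t,s\in(0,1)$.
\item \emph{Mixed signs.} Either $t-s=2j/m$ and $t+s=2k/n$, or the same with $m$ and $n$ swapped.
\end{itemize}
Solving the mixed cases, $(t,s)=(j/m+k/n,\ k/n-j/m)$ up to ordering. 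The constraints $0<s<t<1$ become $j,k\ge1$ and $j/m+k/n<1$. The two mixed families cover the regions $k/n>j/m$ and $k/n<j/m$ respectively. The diagonal $j/m=k/n$ contains no interior lattice point because $\gcd(m,n)=1$.

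So self-intersections correspond bijectively to the interior lattice points of the triangle with vertices $(0,0)$, $(m,0)$, $(0,n)$. The hypotenuse contains no lattice points, so Pick's theorem gives $(m-1)(n-1)/2$ of them.

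\textbf{Remaining checks.} Three points need verifying.
\begin{itemize}
\item Distinct pairs $(j,k)$ give distinct points of $\Omega$. A triple coincidence would need three parameters pairwise related as above, and the parity bookkeeping rules this out. This gives multiplicity one.
\item No intersection occurs on the boundary of the square, where reflection rather than crossing happens. This follows from $j/m+k/n\notin\Z$ and $k/n-j/m\ne0$.
\item Truncating at the diamond, and perturbing $\tan\theta$ off $n/m$ inside the window, removes or creates no intersection. Every intersection parameter satisfies $t\le1-1/(mn)$. The ray enters the diamond at parameter about $1-\varepsilon/m$, and $\varepsilon<1/Q\le1/n$. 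Under a small change of slope, the crossings persist by transversality.
\end{itemize}

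The main obstacle is this last robustness step, together with the threshold comparisons in the second step. I would handle both by writing the perturbed line explicitly and using $|aq'-a'q|=1$ and $q,q'\le Q<1/\varepsilon$.
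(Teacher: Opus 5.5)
The gap is in the step you flag yourself: passing from slope $n/m$ to the actual slope $\sigma=\tan\theta$ in the window. ``The crossings persist by transversality'' does not establish it. Your rational-slope count (the four sign cases modulo $2$, the two mixed families filling the triangle $(0,0)$, $(m,0)$, $(0,n)$, and Pick's theorem) is the paper's Lemma~\ref{lem:formula for intersection points}, and your threshold analysis reproduces the paper's case split. Transversality only controls sufficiently small perturbations, and here the perturbation is not small on the relevant scale. In abscissa coordinates the intersection indexed by $(j,k)$ sits at $x_1=j+k/\sigma$ and $x_2=|k/\sigma-j|$. It survives truncation iff $x_1<X$, where $(X,\sigma X)$ is the entry point into the diamond. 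Your bound $t\le 1-1/(mn)$ says that at $\sigma=n/m$ the margin $m-x_1$ is at least $1/n$. But across a window of length up to $\varepsilon/m$, the term $k/\sigma$ moves by up to about $k\varepsilon m/n^2$. This is nearly $1/n$ when $k$ is close to $n$ and $m$ is close to $Q$. Meanwhile $X$ itself moves within $[m-\varepsilon,m]$, with $\varepsilon\le 1/Q\le 1/n$. All three quantities are of the same order, so nothing qualitative decides whether an intersection slides past the endpoint. Transversality also says nothing about new intersections entering from pairs $(j,k)$ just outside the triangle.

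What is needed is an exact count at the true slope. Rerun your mod-$2$ analysis with slope $\sigma$ and cutoff $X$. For irrational $\sigma$ the pure cases are impossible, and the two mixed families give exactly $I_\varepsilon(\theta)=\#\{(j,k)\in\Z_{\ge1}^2 : j+k/\sigma<X\}$. In row $k$ the number of admissible $j$ is $\lceil X-k/\sigma\rceil-1$ (when positive). Compare this with $\lfloor m(n-k)/n\rfloor$ for $1\le k\le n-1$, and with $0$ for $k\ge n$. Since $m(n-k)/n$ lies at distance at least $1/n$ from $\Z$, it suffices to show two things: $|X-k/\sigma-m(n-k)/n|<1/n$ for $1\le k\le n-1$, and $X-n/\sigma\le 1$. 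Write $(X,\sigma X)$ explicitly on the edge of the diamond that is crossed. These conditions then reduce to inequalities such as $\varepsilon(n-k)<1$ and $\varepsilon km<n$, which follow from $\varepsilon\max(m,n)\le 1$. They do hold, so your plan is viable. But this computation is the real content of the theorem beyond the rational case and cannot be left as a remark. For comparison, the paper handles this step by a Farey-neighbour argument: an extra intersection would force the ray across a lattice point of slope strictly between $a/q$ and $\tan\theta$ with denominator at most $Q$. In your parametrization that argument only secures $x_2\neq 0$; the comparison of $x_1$ with $X$ still has to be checked.

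Two smaller points. First, the inequality $t_{S'}\le(a+\varepsilon)/q$ is equivalent to $\varepsilon(q+q')\ge 1$. So it comes from $q+q'\ge Q+1$ (Lemma~\ref{lem:lemma 2.1}), not from $q'\le Q$. The same inequality guarantees that one of the diamonds at $(q,a)$, $(q',a')$ is always reached, which is the clean reason lattice points with abscissa larger than $Q$ play no role. Second, your multiplicity-one argument is good: two mixed relations compose to a pure one, which is impossible. It is cleaner than the paper's directed-line argument in Lemma~\ref{lem:unique intersection point} and applies verbatim to irrational slopes.
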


Theorem \ref{thm:asymptotic of self-intersection count} fully characterizes the number of self-intersections for any irrational angle $\theta$, since $\tan\theta$ lies between some pair of consecutive Farey fractions of order $Q$. 

As corollaries of the above theorem, we provide asymptotic formulas for all 
$r$-th moments of $I_\varepsilon(\theta)$ for $r\ge 1$,  establish weak convergence  of a related probability measure, and derive an explicit density for the distribution of self-intersection points.

\begin{coro}\label{thm:integration over angles of self-intersection count}

For any interval $L \subseteq [0, \frac{\pi}{4}]$, any $\alpha, \delta > 0$ and $r \ge 1$, we have 
\begin{align*}
    &\varepsilon^{2r} \int \limits_{L} I_{\varepsilon}^{r}(\theta)d\theta = \frac{C_{r}}{2^{r}} \int \limits_{L} \tan^{r}(\omega) d\omega + O_{\alpha,r,\delta} \left( \varepsilon^{\frac{1}{2}-2 \alpha -\delta} +|L|\varepsilon^{\alpha}\right)
\end{align*} 
as $\varepsilon\rightarrow 0$, where
\begin{align}
    C_{r} 
    &= \frac{6}{r(2r+1)\pi^2}\left(\log 2 + \sum_{k=1}^{2r-1} \frac{1-2^{-k}}{k}\right)\label{eq:Cr final}\\
    &=\frac{6}{r(2r+1)\pi^2}[\log (2r-1)+\gamma] + O(r^{-3}).\notag
\end{align}
Here, $\gamma$ is the Euler–Mascheroni constant.

\begin{rem}
    Each $C_r$ in Corollary \ref{thm:integration over angles of self-intersection count} is a finite sum of terms that can be computed explicitly. For example, 
\begin{align*}
C_1 &= \frac{2}{\pi^2}\left(\log2+\frac12\right) &&\approx 0.24178,
&
C_2 &= \frac{3}{5\pi^2}\left(\log2+\frac76\right) &&\approx 0.11306,\\
C_3 &= \frac{2}{7\pi^2}\left(\log2+\frac{1531}{960}\right) &&\approx 0.06623,
&
C_4 &= \frac{1}{6\pi^2}\left(\log2+\frac{3193}{1680}\right) &&\approx 0.04380,\\
C_5 &= \frac{6}{55\pi^2}\left(\log2+\frac{1377977}{645120}\right) &&\approx 0.03127,
&
C_6 &= \frac{1}{13\pi^2}\left(\log2+\frac{16511491}{7096320}\right) &&\approx 0.02353.
\end{align*}

\end{rem}
\end{coro}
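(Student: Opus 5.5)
Since $\tan$ is smooth and increasing, I will first pass to the variable $t=\tan\theta\in[0,1]$, with $d\theta = dt/(1+t^2)$. By Theorem \ref{thm:asymptotic of self-intersection count}, on each Farey arc $[a/q,a'/q')$ of $\mathcal{F}_Q$, $Q=[1/\varepsilon]$, the function $I_\varepsilon$ takes only the two values $(a-1)(q-1)/2$ and $(a'-1)(q'-1)/2$. The transition points lie at $a/q+O(\varepsilon/q^2)$ or at $a'/q'-O(\varepsilon/q'^2)$ from the endpoints, i.e.\ at $(a+\varepsilon)/q$ or $(a'-\varepsilon)/q'$.

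For the main-term computation I will write $a=tq+O(1/Q)$ on the arc. Then
\[
\varepsilon^{2r}I_\varepsilon^r = \varepsilon^{2r}2^{-r}(tq^2)^r\bigl(1+O(1/q)\bigr) = 2^{-r}t^r (\varepsilon q)^{2r}+\text{error}.
\]
In the case $q<q'$, the arc $[a/q,a'/q')$ has length $1/(qq')$. The value $q$ is used on an initial piece of length $\varepsilon/q$, and the value $q'$ on a piece of length $1/(qq')-\varepsilon/q=(1-\varepsilon q')/(qq')$. The case $q>q'$ is symmetric: the value $q'$ is used on a final piece of length $\varepsilon/q'$. (The third subcase of the theorem has the same value as the second on its middle piece, so it merges with it.) So the integral over $L$ becomes
\[
2^{-r}\sum_{\text{arcs in } \tan L} t^r\Bigl[(\varepsilon q_{\min})^{2r}\tfrac{\varepsilon}{q_{\min}}+(\varepsilon q_{\max})^{2r}\tfrac{1-\varepsilon q_{\min}}{q q'}\Bigr]\frac{1}{1+t^2},
\]
with $t$ frozen at an endpoint (an error of $O(\varepsilon)$ per unit length). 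Thus the problem reduces to the equidistribution of consecutive denominators $(q,q')$ of $\mathcal{F}_Q$, restricted to an interval. The classical fact is that $(q/Q,q'/Q)$ is equidistributed in the Farey triangle $\mathcal{T}=\{(x,y)\in(0,1]^2:x+y>1\}$ with respect to $\frac{6}{\pi^2}\,dx\,dy$ times length. Quantitatively, for a smooth weight $f$ and an interval $J$,
\[
\sum_{\substack{a/q<a'/q' \\ a/q\in J}} f\Bigl(\frac qQ,\frac{q'}Q\Bigr) = \frac{6Q^2}{\pi^2}|J|\int_{\mathcal T} f + O_\delta\bigl(Q^{3/2+\delta}\bigr).
\]
This is the Boca--Cobeli--Zaharescu lattice-point count via Kloosterman/Weil bounds, obtained by counting $q'\equiv -\bar a$-type residues in short intervals. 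I will apply it with $f(x,y)=xy\,\Phi(x,y)$, where $\Phi$ is the bracketed weight rescaled by $Q^2$ (note $\tfrac{1}{qq'}\approx Q^{-2}/(xy)$). I will localize in $t$ by cutting $\tan L$ into subintervals of length $\varepsilon^{\alpha}$, freezing $t^r/(1+t^2)$ on each. This freezing produces the $|L|\varepsilon^\alpha$ error. Applying the equidistribution on each of the $\approx \varepsilon^{-\alpha}$ pieces produces errors of $\varepsilon^{-\alpha}Q^{-1/2+\delta}$. The extra $\varepsilon^{-\alpha}$ in the exponent $\tfrac12-2\alpha-\delta$ would come from the weight being only piecewise smooth, since the boundary $\varepsilon q_{\min}$ terms need smoothing at scale $\varepsilon^\alpha$. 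Transforming back with $dt/(1+t^2)=d\theta$ gives $\int_L\tan^r\omega\,d\omega$.

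The constant is then
\[
C_r=\frac{6}{\pi^2}\int_{\mathcal T}\Bigl(\min(x,y)^{2r}\cdot\min(x,y)\cdot xy\cdot\tfrac{1}{\min(x,y)\,xy}\cdot(\dots)\Bigr),
\]
which simplifies to
\[
C_r=\frac{12}{\pi^2}\iint_{\mathcal T,\,x<y}\Bigl[x^{2r} + y^{2r}\,\frac{1-x}{xy}\Bigr]\,dx\,dy,
\]
up to normalization, since the $\varepsilon/q$ piece scales as $Q^{-2}\cdot x\cdot$(density factor). Carrying out the elementary integrals over $\{x<y,\ x+y>1\}$ produces $\log 2$ and harmonic-type sums $\sum_k(1-2^{-k})/k$, matching \eqref{eq:Cr final}. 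I will verify the formula at $r=1$ against $C_1=\frac{2}{\pi^2}(\log2+\frac12)$. The asymptotic in $\log(2r-1)+\gamma$ follows from $\sum_{k<2r}1/k=\log(2r-1)+\gamma+O(1/r)$ together with $\sum_k 2^{-k}/k\to\log2$.

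The main obstacle is the uniform quantitative equidistribution of consecutive Farey denominators in short intervals of slope. This requires Weil's bound for Kloosterman sums, which is where the $\tfrac12$ in the exponent originates. The bookkeeping of the $O(1/q)$ errors from small $q$ (where $a-1$ versus $tq$ differ) is the other point to watch. Those arcs have $q\ll \varepsilon^{-1+\alpha}$ and total measure small enough to be absorbed in the error.
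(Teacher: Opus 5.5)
Your route is essentially the paper's. Theorem \ref{thm:asymptotic of self-intersection count} makes $I_\varepsilon$ two-valued on each Farey arc, so the integral becomes a weighted sum over consecutive denominators $(q,q')$ with $a/q\in J$. That sum is then evaluated by the Weil-bound count of $x\bar x\equiv 1 \pmod q$ in boxes, followed by summation over $q$. In the paper this is Lemma \ref{lem:zaha lemma 2.2} with $x=q'$, $\bar x\equiv q-a$, $T=[Q^\alpha]$, and then Lemma \ref{lem:lemma 2.3}. Freezing $t$ on pieces of length $\varepsilon^\alpha$, instead of putting $t^r/(1+t^2)$ into the $C^1$ weight, is a harmless variant. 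Your own accounting already gives $\varepsilon^{1/2-\alpha-\delta}+|L|\varepsilon^{\alpha}$, which implies the stated bound. No smoothing is needed, since for fixed $q$ the constraints $q'\gtrless q$ and $q'>Q-q$ are just endpoints of the $q'$-range; in the paper the $Q^{2\alpha}$ loss comes from the $T^2q^{1/2+\delta}\|f\|_\infty$ term.

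The genuine gap is in the main term, which is exactly where the explicit $C_r$ of the statement comes from. Your weight is wrong in two places. Take $x=q/Q$, $y=q'/Q$ and $q<q'$.
\begin{itemize}
\item The short piece contributes $\frac{t^r}{2^r(1+t^2)}(\varepsilon q)^{2r}\frac{\varepsilon}{q}\approx\frac{t^r}{2^r(1+t^2)}Q^{-2}x^{2r-1}$. Note that $\varepsilon/q\approx Q^{-2}/x$, not $Q^{-2}x$, so the power is $x^{2r-1}$, not $x^{2r}$.
\item The long piece has length $(1-\varepsilon q')/(qq')$, as you correctly computed, so it contributes $\frac{t^r}{2^r(1+t^2)}Q^{-2}y^{2r-1}(1-y)/x$. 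Your displayed sum instead uses $1-\varepsilon q_{\min}$ where it should use $1-\varepsilon q_{\max}$.
\end{itemize}
Hence the test function is simply $W=x^{2r-1}+y^{2r-1}(1-y)/x$ on $\{x<y\}$ (symmetrically on $\{x>y\}$), with no extra factor $xy$. On the Farey triangle $\mathcal T=\{(x,y)\in(0,1]^2:x+y>1\}$ this gives
\[
C_r=\frac{12}{\pi^2}\iint_{\mathcal T,\ x<y}\Bigl[x^{2r-1}+\frac{y^{2r-1}(1-y)}{x}\Bigr]dx\,dy=\frac{12}{\pi^2}\Bigl[\frac{1-2^{-2r}}{2r(2r+1)}+\int_{1/2}^{1}y^{2r-1}(1-y)\log\frac{y}{1-y}\,dy\Bigr].
\]
This agrees with the paper's \eqref{def:Cr} after folding $[1/2,1]$ onto $[0,1/2]$, and at $r=1$ it gives $C_1=\frac{12}{\pi^2}\bigl(\frac{1}{12}+\frac{\log 2}{6}\bigr)=\frac{2}{\pi^2}\bigl(\log 2+\frac12\bigr)$.

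Your integrand $x^{2r}+y^{2r}\frac{1-x}{xy}$ gives $\frac{12}{\pi^2}\bigl(\frac{11}{96}+\frac{\log 2}{2}\bigr)$ at $r=1$. Its ratio of $\log 2$-coefficient to rational part is $48/11$, against $2$ for $C_1$, so no normalization fixes it. Your planned $r=1$ check would fail, and the elementary integrals would not produce \eqref{eq:Cr final}. With the corrected $W$, the rest of your plan goes through, including the deduction of the $\log(2r-1)+\gamma$ asymptotic.
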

\begin{rem}
    The result for $\theta\in (\pi/4,\pi/2)$ follows immediately by symmetry. The reasoning follows similarly as in the proof of Theorem \ref{thm:asymptotic of self-intersection count}.
\end{rem}
\begin{coro}\label{coro: measure converge}
    Consider the probability measure $\mu^{L}_{\varepsilon}$ on $[0, \infty)$ defined by
    \begin{align*}
       \mu^{L}_{\varepsilon}(f)  = \frac{1}{|L|}\int \limits_{L} f(\varepsilon^{2} I_{\varepsilon}(\omega)) d\omega, \quad \text{where $f \in C_{c}([0, \infty))$}.
    \end{align*}
There exists a probability measure $\mu^{L}$ on $[0,1/2]$ such that 
    \begin{align*}
        \mu^{L}_{\varepsilon} \rightarrow \mu^{L} \quad \text{ as $\varepsilon \rightarrow 0^{+}$}.
    \end{align*}
    Moreover, the moments of $\mu^{L}$ are given by
    \begin{align*}
        \int \limits_{0}^{\infty} t^{n} d\mu^L(t) = \frac{C_{n}}{2^{n}|L|} \int \limits_{L} (\tan x)^n dx,
    \end{align*}
    where $n \in \mathbb{N}$ and $C_{n}$ is defined in Corollary \ref{thm:integration over angles of self-intersection count}.
\end{coro}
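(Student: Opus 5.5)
We deduce Corollary \ref{coro: measure converge} from Corollary \ref{thm:integration over angles of self-intersection count} by the method of moments.

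\emph{Support.} By Theorem \ref{thm:asymptotic of self-intersection count}, $I_\varepsilon(\omega)\le (q-1)(a-1)/2$ for some $a\le q\le Q=[1/\varepsilon]$, so
\[
\varepsilon^2 I_\varepsilon(\omega)\le \varepsilon^2 Q^2/2\le 1/2 .
\]
Hence every $\mu^L_\varepsilon$ is a probability measure supported in $[0,1/2]$.

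\emph{Moments of $\mu^L_\varepsilon$.} For each $n\ge1$,
\[
\int t^n\,d\mu^L_\varepsilon(t)=\frac{\varepsilon^{2n}}{|L|}\int_L I_\varepsilon^n(\omega)\,d\omega .
\]
Apply Corollary \ref{thm:integration over angles of self-intersection count} with $r=n$ and with $\alpha,\delta$ small, say $\alpha=\delta=1/10$, so that $\tfrac12-2\alpha-\delta>0$. The error term then tends to $0$, and
\[
m_n:=\lim_{\varepsilon\to0^+}\int t^n\,d\mu^L_\varepsilon(t)=\frac{C_n}{2^n|L|}\int_L \tan^n x\,dx .
\]
The zeroth moment is $1$ for all $\varepsilon$.

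\emph{Existence and uniqueness of the limit.} All $\mu^L_\varepsilon$ live on the compact interval $[0,1/2]$, so the family is tight. By Prokhorov (or Helly), every sequence $\varepsilon_k\to0$ has a subsequence along which $\mu^L_{\varepsilon_k}$ converges weakly to some probability measure $\nu$ on $[0,1/2]$.

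Since $t^n$ is continuous and bounded on $[0,1/2]$, the moments of $\nu$ equal the $m_n$. A measure on a compact interval is determined by its moments: polynomials are dense in $C([0,1/2])$ by Weierstrass. Hence all subsequential limits coincide with a single measure $\mu^L$. It follows that the whole family converges, $\mu^L_\varepsilon\to\mu^L$.

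Test functions $f\in C_c([0,\infty))$ restrict to continuous functions on $[0,1/2]$, where all the measures live. So this convergence is exactly the statement required, and $\mu^L$ has the stated moments.

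\emph{Main obstacle.} The only delicate point is uniform support. It rests on $\varepsilon^2 I_\varepsilon\le 1/2$, which follows from $a\le q\le Q\le 1/\varepsilon$ applied to the count in Theorem \ref{thm:asymptotic of self-intersection count}, together with the analogous bound for $a',q'$. Support on a fixed compact set is what makes the moment problem determinate and gives tightness. Without it one would need a Carleman-type growth bound on $C_n$, which is available in any case since $C_n\to0$.
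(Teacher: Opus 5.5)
Your proposal is correct and follows the route the paper intends. The paper gives no proof of its own and defers to the argument of \cite[Corollary 1.6]{BocaGologanZaha}, which is this same combination: uniform compact support, convergence of moments from Corollary \ref{thm:integration over angles of self-intersection count}, and uniqueness of a compactly supported measure with prescribed moments via Weierstrass approximation. Your explicit bound $\varepsilon^2 I_\varepsilon(\omega)\le \varepsilon^2 (Q-1)^2/2\le 1/2$ from Theorem \ref{thm:asymptotic of self-intersection count} also justifies the support $[0,1/2]$, which the paper leaves implicit.
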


Corollary \ref{coro: measure converge} follows from Corollary \ref{thm:integration over angles of self-intersection count} similarly as \cite[Corollary 1.6]{BocaGologanZaha} follows from \cite[Theorem 1.5]{BocaGologanZaha}.  


Before stating the next corollary, we present the analog of \cite[Prop. 3.7]{BocaGologanZaha2}, the proof of which follows in the same way with some appropriate changes. We define
\begin{align}
    \widetilde{F}_{\varepsilon, J, Q}(t) := \left|\left\{\omega: \tan\omega \in J, I_{1/Q}(\omega) > \frac{t}{\varepsilon^{2}}\right\}\right|.
\end{align}

\begin{prop}\label{prop: local distribution} 
Take $\theta, \theta_{1} \in (0,1)$ and suppose $J = [\tan\omega_{0}, \tan\omega_{1}]$ with $J \subset [0,1]$ having the property that $|J| \asymp\varepsilon^{\theta}$. Then
\begin{align*}
    \widetilde{F}_{\varepsilon, J, Q}(t) = C_{J}H\Bigg(\sqrt{\frac{2t}{\tan(\omega_{0})}} \Bigg) + O\big(E_{\theta, \theta_{1},\delta}(\varepsilon)\big)
\end{align*}
uniformly for t in any compact subset $(0, \infty) \setminus \{1,2\}$, where
\begin{align*}
    C_{J} = \int \limits_{J} \frac{dt}{1+t^{2}} = \omega_{1} - \omega_{0}, \quad \quad E_{\theta, \theta_{1},\delta}(\varepsilon) = \varepsilon^{\min\left(\frac{3}{2}\theta,\ \frac{1}{2}-2\theta_{1}-\delta,\ \theta+\theta_{1}-\delta\right)}
\end{align*}
and
\begin{align*}
    H(t) = 
    \begin{cases}
        1 - \dfrac{12t}{\pi^2}, & 0 \le t \le \dfrac{1}{2}, \\[1.2ex]
        \dfrac{12}{\pi^2} \displaystyle\int_t^1 \dfrac{1-x}{x} \left(1 + \ln \dfrac{x}{1-x}\right) dx, & \dfrac{1}{2} \le t \le 1, \\[1.2ex]
        0, & t \ge 1.
    \end{cases}
\end{align*}
\end{prop}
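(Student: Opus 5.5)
The plan is to reduce the count in $\widetilde{F}_{\varepsilon,J,Q}(t)$ to a lattice-point count over consecutive Farey pairs, using Theorem \ref{thm:asymptotic of self-intersection count}, and then to follow the method of \cite[Prop. 3.7]{BocaGologanZaha2}. By Theorem \ref{thm:asymptotic of self-intersection count}, on each Farey interval $[a/q,a'/q')\cap J$ the function $I_{1/Q}(\omega)$ is piecewise constant. It equals either $(a-1)(q-1)/2$ or $(a'-1)(q'-1)/2$, and the break point is $\frac{a+\varepsilon}{q}$ when $q<q'$ and $\frac{a'-\varepsilon}{q'}$ when $q>q'$. These break points sit at distance $\asymp \varepsilon/q$ (resp.\ $\varepsilon/q'$) from an endpoint of an interval of total length $1/(qq')$. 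Hence, up to an error, the measure of $\{\omega : \tan\omega\in J,\ I>t/\varepsilon^2\}$ is a sum over consecutive pairs of lengths of explicit subintervals. The change of variables $\tan\omega=x$ contributes the weight $dx/(1+x^2)$. Since $|J|\asymp\varepsilon^{\theta}$ is short, we may replace $1/(1+x^2)$ by its value at $\tan\omega_0$, and likewise replace $a\approx q\tan\omega_0$ and $a'\approx q'\tan\omega_0$. The resulting error of relative size $|J|$ produces the term $\varepsilon^{\frac32\theta}$ after combining with the main-term size $|J|$.

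With these replacements, $\varepsilon^2 (a-1)(q-1)/2\approx \tan\omega_0\, (q/Q)^2/2$. So the condition $I>t/\varepsilon^2$ becomes $q/Q>\sqrt{2t/\tan\omega_0}=:s$, or the same condition for $q'$, on the appropriate subinterval. The quantity to evaluate is therefore
\begin{align*}
\sum_{\substack{a/q<a'/q' \text{ consecutive}\\ a/q\in J}} \Bigl( \tfrac{1}{qq'}\mathbbm{1}_{q'>sQ}\cdot\bigl(\text{fraction where } I=I(q')\bigr) + \tfrac{1}{qq'}\mathbbm{1}_{q>sQ}\cdot\bigl(\text{fraction where } I=I(q)\bigr)\Bigr).
\end{align*}
Both fractions are explicit rational functions of $q/Q$ and $q'/Q$, up to $O(\varepsilon)$ corrections. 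The case $t_{S'}$ versus $t_W$ only shifts one endpoint by an amount absorbed in these corrections.

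Next I would use the standard parametrization of consecutive Farey pairs by primitive-ish lattice points $(q,q')$ in the Farey triangle $\mathcal{T}_Q=\{(x,y)\in(0,Q]^2: x+y>Q\}$ with $\gcd(q,q')=1$. I would invoke the equidistribution of consecutive pairs with $a/q$ in a short interval $J$. The count is $|J|\cdot\frac{6}{\pi^2}\cdot\frac{1}{Q^2}\sum f(q/Q,q'/Q)$, and the main term is $\frac{12}{\pi^2}|J|\iint_{\mathcal{T}} f$. This is done via Weil/Kloosterman-type bounds on the number of $(q,q')$ with $\bar q$ restricted to an interval, exactly as in \cite{BocaGologanZaha2}. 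The parameter $\theta_1$ enters by smoothing the region and the cutoff at scale $\varepsilon^{\theta_1}$; this is what gives the errors $\varepsilon^{\frac12-2\theta_1-\delta}$ and $\varepsilon^{\theta+\theta_1-\delta}$. Computing the integral of the indicator-weighted densities over $\mathcal{T}\cap\{x>s\text{ or } y>s\}$ then yields $C_J H(s)$. For $s\le 1/2$ every point of $\mathcal T$ has $\max(x,y)>1/2$, which gives the linear part $1-12s/\pi^2$. For $1/2\le s\le 1$ the region $\{x,y\le s, x+y>1\}$ gives the logarithmic integral, and for $s\ge1$ the set is empty.

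The main obstacle is the lattice-point step: uniformly controlling the number of consecutive Farey pairs with $a/q$ in an interval of length $\varepsilon^\theta$ and $(q,q')$ in a region with piecewise smooth boundary. This needs the Kloosterman-sum estimate with explicit dependence on $|J|$ and on the boundary smoothing. I would also need to check that $t$ bounded away from $1$ and $2$, where the region degenerates, keeps the boundary terms uniform. I would copy the argument of \cite[Prop. 3.7]{BocaGologanZaha2} there, changing only the weight function.
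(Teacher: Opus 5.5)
Your outline is correct and is essentially the route the paper intends: the paper gives no separate argument beyond adapting \cite[Prop.~3.7]{BocaGologanZaha2}. Concretely, you reduce via Theorem~\ref{thm:asymptotic of self-intersection count} (on each Farey interval the value attached to $M=\max(q,q')$ holds on relative length $1-\varepsilon M$ and the other value on relative length $\varepsilon M$, so the threshold becomes the interval conditions $M>sQ$, resp.\ $\min(q,q')>sQ$), and then apply Lemma~\ref{lem:zaha lemma 2.2} with $T=[Q^{\theta_1}]$, which is where $\varepsilon^{\frac12-2\theta_1-\delta}$ and $\varepsilon^{\theta+\theta_1-\delta}$ come from. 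When writing it out, note two things. With the factor $\frac{1}{uv}$ built into $f$, the equidistribution constant is $\frac{6}{\pi^2}$, not $\frac{12}{\pi^2}$; this is what yields $1-\frac{12s}{\pi^2}$ for $s\le\frac12$. And for $\frac12\le s\le 1$, the strip $\{\min(u,v)\le s<\max(u,v)\}$, weighted by $\max(u,v)$, contributes to $H$ alongside the region $\{u,v\le s\}$.
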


\begin{coro}\label{thm: explicit density}
    Consider the repartition function
    \begin{align*}
        F_{\varepsilon}(t) := \frac{4}{\pi}\left|\{\omega \in (0,\pi/4] : \varepsilon^{2}I_{\varepsilon}(\omega) > t\}\right|.
    \end{align*}
    Then for each $t > 0$, the limit $F(t) := \lim\limits_{\varepsilon \rightarrow 0^{+}} F_{\varepsilon}$ exists. Specifically, one has
    \begin{align*}
        F_{\varepsilon}(\omega) = F(t) + O_\delta(\varepsilon^{1/10-\delta})
    \end{align*}
    with $\delta > 0$ and
    \begin{align*}
        F(t) = \frac{4}{\pi} \int \limits_{0}^{\frac{\pi}{4}}H\Bigg(\sqrt{\frac{2t}{\tan(\omega)}}\Bigg)d\omega 
    \end{align*}
\end{coro}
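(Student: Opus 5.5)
We prove Corollary \ref{thm: explicit density} by covering $[0,1]$, in the $\tan\omega$ variable, with short intervals and applying Proposition \ref{prop: local distribution} on each one.

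\textbf{Setup.} Substitute $\tan\omega\in[0,1]$ and fix an exponent $\theta\in(0,1)$, to be optimized at the end. Partition $[0,1]$ into $N\asymp\varepsilon^{-\theta}$ consecutive intervals $J_k=[\tan\omega_{k},\tan\omega_{k+1}]$, each of length $\asymp\varepsilon^{\theta}$. Since $I_\varepsilon=I_{1/Q}$ with $Q=[1/\varepsilon]$,
\begin{align*}
\frac{\pi}{4}F_\varepsilon(t)=\sum_{k}\widetilde F_{\varepsilon,J_k,Q}(t).
\end{align*}
The first interval, near $\tan\omega=0$, needs separate treatment. Theorem \ref{thm:asymptotic of self-intersection count} gives $\varepsilon^2 I_\varepsilon\le \varepsilon^2 aq/2\lesssim \tan\omega$, because $a/q\approx\tan\omega$ and $q\le Q$. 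Hence for fixed $t>0$ the set where $\varepsilon^2I_\varepsilon>t$ avoids $\tan\omega\lesssim t$. Likewise $H$ vanishes there, since its argument $\sqrt{2t/\tan\omega}$ exceeds $1$. So the intervals near $0$ contribute nothing to either side and can be dropped, and $\tan\omega_0$ stays bounded below on the remaining intervals.

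\textbf{Local main terms.} On each remaining interval, Proposition \ref{prop: local distribution} gives
\begin{align*}
(\omega_{k+1}-\omega_k)\,H\Bigl(\sqrt{2t/\tan\omega_k}\Bigr)+O\bigl(E_{\theta,\theta_1,\delta}(\varepsilon)\bigr).
\end{align*}
It applies uniformly for $t$ in compact subsets away from the exceptional values, and we handle those values by continuity of $H$. Summing the main terms yields a left Riemann sum for $\int_0^{\pi/4}H(\sqrt{2t/\tan\omega})\,d\omega$.

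\textbf{Riemann-sum error.} We bound it by $O(\varepsilon^{\theta})$ as follows.
\begin{itemize}
\item $H$ is Lipschitz on $[0,1]$: its derivative is bounded, and the $\log$ singularity of the integrand at $x=1$ is integrable, so it causes no trouble.
\item $\omega\mapsto\sqrt{2t/\tan\omega}$ is Lipschitz wherever $\tan\omega\gg t$.
\item Each cell has width $\asymp\varepsilon^\theta$ and there are $\asymp\varepsilon^{-\theta}$ cells, giving a total error $O(\varepsilon^{\theta})$.
\end{itemize}

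\textbf{Accumulated error and optimization.} The local errors add up to $N\cdot E=\varepsilon^{-\theta}E_{\theta,\theta_1,\delta}(\varepsilon)$, so the total error is
\begin{align*}
\varepsilon^{\min(\frac{\theta}{2},\ \frac12-2\theta_1-\theta-\delta,\ \theta_1-\delta)}+\varepsilon^{\theta}.
\end{align*}
We choose $\theta$ and $\theta_1$ to maximize the minimum exponent. Setting $\theta/2=\theta_1=\frac12-2\theta_1-\theta$ gives $\theta_1=\theta/2$ and $\frac12=3\theta_1+\theta=\frac{5\theta}{2}$. Hence $\theta=1/5$, $\theta_1=1/10$, and every exponent is $\ge 1/10-\delta$, which is the claimed $O_\delta(\varepsilon^{1/10-\delta})$. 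This also proves that the limit $F(t)$ exists.

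\textbf{Main obstacle.} The delicate step is bookkeeping the error terms so that the crude summation of $E$ over $\varepsilon^{-\theta}$ intervals still wins. This also requires confirming that the constants implicit in Proposition \ref{prop: local distribution} are uniform in the location of $J$, which holds once $\tan\omega_0$ is bounded below. If near-zero intervals did contribute, we would instead absorb them into a short initial segment of length $\varepsilon^{\theta}$ at cost $O(\varepsilon^{\theta})$.
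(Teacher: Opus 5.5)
Your proposal is correct and follows the paper's route. The paper obtains this corollary from Proposition~\ref{prop: local distribution} in the same way Boca--Gologan--Zaharescu obtain their Theorem~1.1: cover by intervals of length $\asymp\varepsilon^{\theta}$, sum the local errors into $\varepsilon^{-\theta}E_{\theta,\theta_1,\delta}(\varepsilon)$, turn the main terms into a Riemann sum, and take $\theta=1/5$, $\theta_1=1/10$. It also handles the region near $\omega=0$ with the same bound $\varepsilon^{2}I_\varepsilon(\omega)\le(\tan\omega+\varepsilon)/2$ that you use.

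Two small touch-ups. First, $I_\varepsilon$ and $I_{1/Q}$ are not literally equal when $1/\varepsilon\notin\mathbb{Z}$, but their switching thresholds differ by $O(\varepsilon^{2}/\min(q,q'))$, so they disagree only on a set of measure $O(\varepsilon)$. Second, the excluded values $t\in\{1,2\}$ need no continuity argument: $\varepsilon^{2}I_\varepsilon<1/2$ and $\tan\omega\le 1$ give $F_\varepsilon(t)=F(t)=0$ for every $t\ge 1/2$.
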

Corollary \ref{thm: explicit density} can be derived from Proposition \ref{prop: local distribution} similarly to how \cite[Theorem 1.1]{BocaGologanZaha2} was derived from \cite[Proposition 3.7]{BocaGologanZaha2}.
\begin{rem}
    Notice that the pole at $\omega=0$ in $F(t)$ in Corollary \ref{thm: explicit density}, which does not occur in \cite[Theorem 1.1]{BocaGologanZaha2}, does not cause any issues. For any fixed $t>0$, there exists $\upsilon>0$ such that $\tan\upsilon < 2t$, so for sufficiently small $\varepsilon>0$, we have $\tan\upsilon < 2t-\varepsilon$. By Theorem \ref{thm:asymptotic of self-intersection count}, for $0<\omega\le \upsilon$, we have 
    \[
    \varepsilon^2 I_\varepsilon (\omega)\le \frac{\tan \omega+\varepsilon}{2}<t, 
    \]
    so the contributions from $\omega\in (0,\upsilon)$ to $F(t)$ is $0$ for sufficiently small $\varepsilon$.
\end{rem}
\section{Properties of Farey Sequences and Kloosterman Sums}
In this section, we recall some known properties of Farey sequences and Kloosterman sums.

Let $Q \ge 2$ be an integer, and denote the Farey sequence of order $Q$, defined in \eqref{def:Farey sequence of order Q}, by $\mathcal{F}_{Q}$. Recall some basic features of consecutive elements in $\mathcal{F}_Q$, collected from \cite[Theorems 28, 30]{HardyWright}, \cite[Lemma 1]{Hall1970}, and \cite[Theorem 9.3]{LeVeque}.
\begin{lem}\label{lem:lemma 2.1}
  \begin{enumerate}
      \item If $a/q$ and $a'/q'$ are two successive terms in  $\mathcal{F}_Q$, then 
\[
a'q - aq' = 1\text{ and } q+q'>Q.
\]
\item If $1\le \max(q,q')\le Q<q+q'$, $a'q-aq'=1$ with $0\le a'<q'$ and $0\le a<q$, then $a/q$ and $a'/q'$ are consecutive elements in $\mathcal{F}_Q$.  
\item  If $q>q'$ then $a\ge a'$; if $q'>q$ then $a' > a$.
  \end{enumerate}  

\end{lem}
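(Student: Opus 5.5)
We prove the three parts in the order (2), (1), (3), since part (2) yields the main inequality used in part (1). Throughout we use the elementary fact that for fractions $b/s < c/t$ we have $c/t - b/s = (cs-bt)/(st) \ge 1/(st)$, because the numerator is a positive integer. The sources \cite{HardyWright, Hall1970, LeVeque} contain these facts, but the argument is short enough to give directly.

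\emph{Part (2).} Assume $a'q-aq'=1$. Then $\gcd(a,q)=\gcd(a',q')=1$, and $a/q<a'/q'$. Suppose some $b/s$ with $s\le Q$ satisfies $a/q<b/s<a'/q'$. Adding the two gaps gives
\[
\frac{1}{qq'} = \frac{a'}{q'}-\frac{a}{q} \ge \frac{1}{sq}+\frac{1}{sq'} = \frac{q+q'}{sqq'}.
\]
Hence $s\ge q+q'>Q$, which is a contradiction. Since $\max(q,q')\le Q$, both fractions have denominator at most $Q$ and nothing of order $\le Q$ lies strictly between them. They are therefore consecutive among the reduced fractions of order $Q$, and in particular consecutive in $\mathcal{F}_Q$.

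\emph{Part (1).} Let $a/q$ have successor $a'/q'$ in $\mathcal{F}_Q$, so $a/q<1$ and hence $a<q$. Because $\gcd(a,q)=1$, the solutions $(x,y)$ of $xq-ay=1$ have $y$ running over a full residue class modulo $q$. We may therefore choose a solution with $Q-q<y\le Q$, and then $y\ge 1$. We check that $x/y$ is a valid candidate:
\begin{itemize}
\item $\gcd(x,y)=1$, and $x/y>a/q>0$, so $x\ge 1$.
\item $x\le y$. Indeed $xq=1+ay\le 1+(q-1)y\le qy$, because $y\ge1$.
\end{itemize}
Thus $x/y\in\mathcal{F}_Q$ and $x/y>a/q$, so $x/y\ge a'/q'$.

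If $x/y\neq a'/q'$, the gap estimate gives
\[
\frac{1}{qy} = \frac{x}{y}-\frac{a}{q} \ge \frac{1}{q'y}+\frac{1}{qq'} = \frac{q+y}{qq'y} > \frac{Q}{qq'y} \ge \frac{1}{qy},
\]
where we used $q+y>Q$ and $q'\le Q$. This is a contradiction. Hence $a'/q'=x/y$, and since both fractions are reduced with positive denominators, $(a',q')=(x,y)$. Therefore $a'q-aq'=1$ and $q+q'=q+y>Q$.

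Alternatively, once the determinant identity is known, $q+q'>Q$ also follows from the mediant. If $q+q'\le Q$, the mediant $(a+a')/(q+q')$ would lie strictly between $a/q$ and $a'/q'$ and have denominator at most $Q$, contradicting consecutiveness.

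\emph{Part (3).} Both claims follow from $a'q=1+aq'$.
\begin{itemize}
\item If $q>q'$ and $a'\ge a+1$, then $a'q\ge aq+q\ge aq'+q'+1>aq'+1$, which is impossible. Hence $a'\le a$.
\item If $q'>q$, then $a'q=1+aq'\ge 1+aq>aq$, so $a'>a$.
\end{itemize}

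The only step needing care is the choice of $y$ in the window $(Q-q,Q]$ in part (1). This choice is what makes the gap inequality strict, and it must be checked that the resulting $x/y$ stays in $(0,1]$, as done above. The other steps are routine.
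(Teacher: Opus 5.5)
Your proof is correct, and it differs from the paper only in that the paper gives no proof: it simply cites Hardy--Wright, Hall, and LeVeque for these facts. Your part (1) is essentially the classical textbook argument (as in Hardy--Wright). You solve $xq-ay=1$ with $Q-q<y\le Q$ and then reach the contradiction $\frac{1}{qy}>\frac{1}{qy}$ from the two gap inequalities. Your mediant remark is the usual proof of $q+q'>Q$, and parts (2) and (3) are the standard elementary arguments. So you have made self-contained what the paper outsources, including the check $x\le y$ that the paper's convention $a\le q$ requires.

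One small caveat concerns part (2). The step ``in particular consecutive in $\mathcal{F}_Q$'' needs both fractions to lie in $\mathcal{F}_Q$. The condition $a'\ge 1$ is automatic from $a'q=1+aq'$. However, the hypothesis $0\le a<q$ allows $a=0$, and this forces the pair $(0/1,\,1/Q)$. Under the paper's definition of $\mathcal{F}_Q$, which requires $1\le a$, we have $0/1\notin\mathcal{F}_Q$. This is a flaw in the statement's conventions rather than in your argument, but it deserves a one-line remark.
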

\begin{lem}\label{lem:property for asymptotic}
Let $a/q$ and $a'/q'$ be two successive terms in  $\mathcal{F}_Q$. Then,
\begin{align*}
    (a-1)^{r}(q-1)^{r} &= (aq)^{r} + O_{r}(a^{r-1}q^{r}),\\
(a'-1)^{r}(q'-1)^{r} &= (a'q')^{r} + O_{r}(a'^{r-1}q'^{r}),\\
\quad\left( \frac{a'}{q'}  \right)^{r} &= \left( \frac{a}{q} \right)^{r} + O_{r}\left(\frac{1}{Q} \right).
\end{align*}
\end{lem}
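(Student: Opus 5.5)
We prove the three estimates separately, using only the elementary facts in Lemma \ref{lem:lemma 2.1} and the fact that consecutive fractions in $\mathcal{F}_Q$ satisfy $1\le a\le q\le Q$ and $1\le a'\le q'\le Q$.

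For the first identity, expand both factors binomially:
\begin{align*}
(a-1)^r(q-1)^r=\sum_{i=0}^{r}\sum_{j=0}^{r}\binom{r}{i}\binom{r}{j}(-1)^{i+j}a^{r-i}q^{r-j}.
\end{align*}
The term $i=j=0$ is $(aq)^r$. Every other term has $a^{r-i}q^{r-j}$ with $i+j\ge 1$. Since $1\le a\le q$, we have $a^{r-i}\le a^{r-1}$ whenever $i\ge1$, and $q^{r-j}\le q^r$. If instead $i=0$ and $j\ge1$, the term is $a^rq^{r-j}\le a^{r}q^{r-1}$, and since $a\le q$ this is at most $a^{r-1}q^{r}$. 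So each of the $(r+1)^2-1$ remaining terms is bounded in absolute value by $\binom{r}{i}\binom{r}{j}a^{r-1}q^r$. This gives the error $O_r(a^{r-1}q^r)$, with a constant depending only on $r$ (at most $4^r$). The second identity is the same computation with $(a',q')$ in place of $(a,q)$, using $1\le a'\le q'$.

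For the third estimate, Lemma \ref{lem:lemma 2.1}(1) gives $a'q-aq'=1$, so $a'/q'-a/q=1/(qq')$. Both fractions lie in $[0,1]$, so the mean value theorem applied to $x\mapsto x^r$ on $[0,1]$ (derivative at most $r$) gives
\begin{align*}
\left|\left(\tfrac{a'}{q'}\right)^r-\left(\tfrac{a}{q}\right)^r\right|\le \frac{r}{qq'}.
\end{align*}
By Lemma \ref{lem:lemma 2.1}(1), $q+q'>Q$, so $\max(q,q')>Q/2$. Together with $\min(q,q')\ge1$ this yields $qq'\ge\max(q,q')>Q/2$. Hence the difference is at most $2r/Q=O_r(1/Q)$.

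The only step needing care is the third one: the bound $1/(qq')$ alone is not $O(1/Q)$ without the lower bound $qq'>Q/2$, which comes from $q+q'>Q$.
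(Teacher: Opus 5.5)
Your proof is correct and follows the route the paper indicates; the paper only says the estimates follow by direct expansion. You use binomial expansion with $1\le a\le q$ for the first two bounds, and $a'/q'-a/q=1/(qq')$ together with $q+q'>Q$ for the third, which writes out the details the paper leaves implicit.
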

\begin{proof}
    The proof follows immediately by direct expansion.
\end{proof}

Additionally, we borrow the following lemmas from \cite{BocaCobeliZaha2000} and \cite{BocaGologanZaha}.

\begin{lem}\cite[Lemma 2.2]{BocaGologanZaha}\label{lem:zaha lemma 2.2}
    Assume that $q \ge 1$ is an integer, $\mathcal{I}$, $\mathcal{J}$ are intervals which contain at most $q$ integers, and $f: \mathcal{I} \times \mathcal{J} \rightarrow \mathbb{R}$ is a $C^{1}$ function. Then for any integer $T > 1$ one has
    \begin{align*}
        \sum_{\substack{a \in \mathcal{I}, b \in \mathcal{J} \\ ab \equiv 1 \bmod q}} f(a,b) = \frac{\varphi(q)}{q^{2}} \underset{\mathcal{I} \times \mathcal{J}}{\int\int}f(x,y) dx\ dy + E_{q,\mathcal{I}, \mathcal{J},f,T},
    \end{align*}
    where
    \begin{align*}
        E_{q,\mathcal{I},\mathcal{J},f,T}  \ll_{\delta} T^{2}q^{\frac{1}{2} + \delta}\|f\|_{\infty} + Tq^{\frac{3}{2}+\delta}\|Df\|_{\infty} + \frac{|\mathcal{I}||\mathcal{J}|\cdot\|Df\|_{\infty}}{T}
    \end{align*}
    for all $\delta > 0$. Here, $\|\cdot\|_{\infty}$ denotes the $L^{\infty}$ norm on $\mathcal{I} \times \mathcal{J}$.
\end{lem}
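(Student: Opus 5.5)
The plan is the classical route via Kloosterman sums: partition $\mathcal{I}\times\mathcal{J}$ into boxes, count the points of the modular hyperbola $ab\equiv 1 \bmod q$ in each box using the Weil bound, and approximate $f$ by a constant on each box. The three error terms in $E_{q,\mathcal{I},\mathcal{J},f,T}$ should correspond, respectively, to the box-counting errors, to the variation of $f$ on boxes weighted by the number of lattice points, and to the variation of $f$ against the main term.

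\emph{Step 1 (counting in a box).} First I establish that for any subintervals $\mathcal{I}'\subseteq\mathcal{I}$ and $\mathcal{J}'\subseteq\mathcal{J}$ (each containing at most $q$ integers),
\begin{align*}
N(\mathcal{I}',\mathcal{J}') := \#\{(a,b)\in \mathcal{I}'\times\mathcal{J}' : ab\equiv 1 \bmod q\} = \frac{\varphi(q)}{q^2}|\mathcal{I}'||\mathcal{J}'| + O_\delta(q^{1/2+\delta}).
\end{align*}
To prove this, write the indicator functions of $\mathcal{I}'$ and $\mathcal{J}'$ modulo $q$ via additive characters. Then
\begin{align*}
N=\frac{1}{q^2}\sum_{h,k \bmod q}\hat{1}_{\mathcal{I}'}(h)\hat{1}_{\mathcal{J}'}(k)\,K(h,k;q),
\end{align*}
where $K$ is the Kloosterman sum. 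The term $h=k=0$ gives the main term, since $K(0,0;q)=\varphi(q)$ and the count of integers in an interval is its length up to $O(1)$. For the remaining terms I use the Weil–Estermann bound
\begin{align*}
|K(h,k;q)|\le \tau(q)\,(h,k,q)^{1/2}q^{1/2},
\end{align*}
together with the geometric-series bounds $|\hat{1}_{\mathcal{I}'}(h)|\ll \min(|\mathcal{I}'|, \|h/q\|^{-1})$. Summing these, with the gcd factor averaged out, yields $O(q^{1/2}\log^2 q\,\tau(q)) = O_\delta(q^{1/2+\delta})$. The terms with exactly one of $h,k$ equal to zero are Ramanujan sums and are handled the same way.

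\emph{Step 2 (box decomposition).} Split $\mathcal{I}$ and $\mathcal{J}$ each into $T$ equal subintervals, giving $T^2$ boxes $B$ of diameter $\ll q/T$. On each box, fix a point $z_B$ and write
\begin{align*}
f = f(z_B) + O\bigl(\|Df\|_\infty q/T\bigr).
\end{align*}

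\emph{Step 3 (assembling the errors).}
\begin{itemize}
\item The constant parts contribute $\sum_B f(z_B)N(B)$. Applying Step 1 box by box produces the main term $\frac{\varphi(q)}{q^2}\sum_B f(z_B)|B|$ plus an error $O(T^2q^{1/2+\delta}\|f\|_\infty)$.
\item Replacing $\sum_B f(z_B)|B|$ by $\iint f$ costs $O(|\mathcal{I}||\mathcal{J}|\,\|Df\|_\infty q/T)$. After multiplication by $\varphi(q)/q^2\le 1/q$, this is at most $|\mathcal{I}||\mathcal{J}|\|Df\|_\infty/T$.
\item The variation of $f$ summed over lattice points costs $\|Df\|_\infty (q/T)\cdot\sum_B N(B)$. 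Using Step 1 again, $\sum_B N(B)\ll |\mathcal{I}||\mathcal{J}|/q + T^2q^{1/2+\delta}$. This gives $|\mathcal{I}||\mathcal{J}|\|Df\|_\infty/T + Tq^{3/2+\delta}\|Df\|_\infty$.
\end{itemize}
Together these are exactly the three claimed error terms.

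The main obstacle is Step 1: obtaining the $q^{1/2+\delta}$ error for arbitrary (non-prime) $q$ uniformly over all intervals. This needs the Weil bound in its general-modulus form with the gcd factor $(h,k,q)^{1/2}$. I would first try to control that factor by bounding $\sum_{d\mid q} d^{1/2}\sum_{d\mid h,\,d\mid k}\|h/q\|^{-1}\|k/q\|^{-1}$, which reduces to $d^{1/2}\cdot(q/d)^2\log^2 q$ over $q^2$ scaling, and is harmless. The rest of the argument is bookkeeping.
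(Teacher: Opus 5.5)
Your proposal is correct and follows the standard route. The paper gives no proof of this lemma and simply quotes it from Boca--Gologan--Zaharescu, where it is obtained the same way: the Weil--Estermann count $N_q(\mathcal{I}',\mathcal{J}')=\frac{\varphi(q)}{q^2}|\mathcal{I}'||\mathcal{J}'|+O_\delta(q^{1/2+\delta})$ is applied to the $T^2$ boxes of a $T\times T$ partition, and $f$ is replaced by a constant on each box. Your accounting of the three error terms is right; in the third bullet you could even apply the count once to all of $\mathcal{I}\times\mathcal{J}$, which would only improve the middle term.
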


\begin{lem}\cite[Lemma 2.3]{BocaCobeliZaha2000}\label{lem:lemma 2.3}
    Suppose that $0 < a< b$ are real numbers, $q \in \mathbb{N}$ and f is piecewise $C^{1}$ on $[a,b]$. Then
    \begin{align*}
        \sum_{a < k \le b} \frac{\varphi(k)}{k}f(k) = \frac{6}{\pi^{2}}\int _{a}^{b}f + O\left(\log b\left(\|f\|_{\infty} + \int_{a}^{b}|f'|\right)\right).
    \end{align*}
\end{lem}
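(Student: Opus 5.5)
The plan is to combine the classical mean-value estimate for $\varphi(k)/k$ with Abel (partial) summation. Set
\begin{align*}
S(x) := \sum_{1\le k\le x}\frac{\varphi(k)}{k}, \qquad R(x) := S(x) - \frac{6}{\pi^2}x .
\end{align*}

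The first step is to show that $R(x) \ll \log(x+2)$ for $x \ge 1$, and $R(x) = O(1)$ for $0<x<1$, where $S(x)=0$. I would use the identity $\varphi(k)/k = \sum_{d\mid k}\mu(d)/d$ and interchange summations:
\begin{align*}
S(x) = \sum_{d\le x}\frac{\mu(d)}{d}\left[\frac{x}{d}\right] = x\sum_{d\le x}\frac{\mu(d)}{d^2} + O\Bigl(\sum_{d\le x}\frac1d\Bigr).
\end{align*}
Since $\sum_{d\ge1}\mu(d)d^{-2} = 1/\zeta(2) = 6/\pi^2$ and the tail satisfies $\sum_{d>x}d^{-2}\ll 1/x$, this gives $S(x) = \frac{6}{\pi^2}x + O(1) + O(\log(x+2))$, which is the claimed bound on $R$.

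The second step is partial summation. First suppose $f$ is continuous and piecewise $C^1$ on $[a,b]$. Then
\begin{align*}
\sum_{a<k\le b}\frac{\varphi(k)}{k}f(k) = S(b)f(b) - S(a)f(a) - \int_a^b S(t)f'(t)\,dt .
\end{align*}
Substituting $S(t) = \frac{6}{\pi^2}t + R(t)$, the linear part gives $\frac{6}{\pi^2}\bigl(bf(b) - af(a) - \int_a^b t f'(t)\,dt\bigr) = \frac{6}{\pi^2}\int_a^b f$ after integrating by parts once more. The remainder part is
\begin{align*}
R(b)f(b) - R(a)f(a) - \int_a^b R(t)f'(t)\,dt \ll \log b\,\Bigl(\|f\|_\infty + \int_a^b|f'|\Bigr),
\end{align*}
because $|R(t)| \ll \log b$ uniformly for $t \le b$. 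Here, as usual, $\log b$ is read as $\max(\log b, 1)$ when $b$ is small. If $b<1$ the sum is empty and $\int_a^b f \le \|f\|_\infty$, so the bound is trivial.

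The third step handles jumps. If $f$ has finitely many jump discontinuities, I would split $[a,b]$ at the jump points and apply the argument on each piece, using the one-sided limits as endpoint values. Each jump contributes at most $O(\|f\|_\infty \log b)$, so the implicit constant depends on the number of pieces. This dependence is harmless in the applications, where $f$ has a bounded number of pieces.

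I expect the main obstacle to be only bookkeeping: making the error uniform in $a$ near $0$, and stating the convention for $\log b$. The arithmetic input, the asymptotic for $S(x)$, is standard.
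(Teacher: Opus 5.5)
Your proof is correct. The paper gives no proof of its own and only cites Boca--Cobeli--Zaharescu, and your argument is the standard one behind that lemma: M\"obius inversion gives $\sum_{k\le x}\varphi(k)/k=\frac{6}{\pi^2}x+O(\log(x+2))$, and Abel summation then transfers this to the weight $f$. Your two caveats are genuinely needed, because as literally stated the bound fails at $b=1$ (where $\log b=0$) and for $f$ with many jumps (since $\int_a^b|f'|$, unlike the total variation, does not see jumps), but neither matters here, since the paper applies the lemma to $q\mapsto G_{r,q}$, which is continuous with a single corner at $q=Q/2$.
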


\section{The formula for the number of self-intersection points}

In this section, we begin with some preliminary results related to the square billiard with the triangular $\varepsilon$-pockets. In the end, we present the proof for Theorem \ref{thm:asymptotic of self-intersection count}, which gives the number of self-intersection points given any initial angle $\theta\in (0,\pi/2]$.

\begin{lem}\label{lem:unique intersection point}
    The multiplicity of each self-intersection point is exactly one, i.e., the particle can hit any point in $\Omega$ at most twice. 
\end{lem}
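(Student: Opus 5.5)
We work in the unfolded picture. The folding map $\pi:\R^2\to[0,1]^2$ (the composition of $x\mapsto x \bmod 2$ with the reflection $u\mapsto\min(u,2-u)$ in each coordinate) sends the straight ray $\ell(t)=t(\cos\theta,\sin\theta)$ onto the billiard trajectory. The trajectory visits a point $p$ of the open square once for each preimage $\pi^{-1}(p)\cap\ell$. These preimages are the images of one point $x\in(0,1)^2$ under the group generated by the reflections $u\mapsto -u$ and $u\mapsto u+2$ in each coordinate. Concretely, they are the four points $(\pm x_1,\pm x_2)$, translated by $2\Z^2$. Points on the boundary sides are handled the same way, with the identifications in the relevant coordinate. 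So the statement reduces to this: for any $x$, a line through the origin with slope $s=\tan\theta$ meets at most two of the four classes $(\sigma_1x_1,\sigma_2x_2)+2\Z^2$, $\sigma_i\in\{\pm1\}$. Moreover, within one class it meets at most one point before the particle is absorbed.

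\textbf{First step (one class).} Fix one class $y+2\Z^2$. If the ray contains $y+2(m,n)$ and $y+2(m',n')$, then the difference $2(m-m',n-n')$ lies on a line of slope $s$. If $s$ is irrational this is impossible. If $s=a/q$ is rational (in lowest terms), the difference is a multiple of $2(q,a)$. But the ray reaches the lattice point $(q,a)$, and is absorbed there, before travelling a distance $2|(q,a)|$. So, before absorption, each class is met at most once.

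\textbf{Second step (the four classes).} Suppose the ray meets three different sign classes, say at $(\sigma_1x_1+2m_1,\ \sigma_2x_2+2n_1)$ and so on. Pick two of the three points whose first-coordinate signs differ. Adding their coordinates cancels $x_1$; if their second-coordinate signs also differ, $x_2$ cancels too. Since the ray is a linear subspace intersected with a half-plane, the sum of two points on it lies on the line of slope $s$. With both signs differing, the sum is a nonzero even lattice point $2(m,n)$ on the line. This forces $s$ rational, $s=a/q$. A sum of two points on the ray at distances $d_1,d_2$ lies at distance $d_1+d_2$, so $(m,n)$ is a multiple of $(q,a)$ at distance at most $(d_1+d_2)/2\le\max(d_1,d_2)$. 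Hence the later of the two visits occurs after the ray has passed through $(q,a)$, that is, after absorption. This is a contradiction.

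Therefore no two visited points are \emph{antipodal} classes $(\sigma_1,\sigma_2)$ and $(-\sigma_1,-\sigma_2)$. Among any three of the four sign classes, two are antipodal. So at most two classes are met, which gives at most two visits in total.

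\textbf{Main obstacle: the $\varepsilon$-pockets.} For $\varepsilon>0$ the ray is absorbed once it enters some diamond $C_\varepsilon+(m,n)$, not exactly at the visible point $(q,a)$. So the argument "after passing $(q,a)$ the particle is dead" must be rephrased. I would argue as follows. If the ray stays at distance more than $\varepsilon$ (in the $\ell^1$ sense) from every lattice point of $\Z^2$ up to length $T$, then its slope is $\varepsilon$-close to no fraction with denominator at most about $T$. This is the Farey/Dirichlet input from Lemma \ref{lem:lemma 2.1}. We then need an analogue of the second step: if two antipodal-class points sum to $2(m,n)$, then the ray passes within $\ell^1$-distance $\varepsilon$ of $(m,n)$ or of a point before it.

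The difficulty is that for irrational $s$ the sum is not exactly on the line. However, the ray must pass close to $(m,n)$: the distance from $(m,n)$ to the ray is governed by $|ma-nq|$-type quantities. I expect to have to use the explicit window from Theorem \ref{thm:asymptotic of self-intersection count}, namely $\tan\theta\in[a/q,a'/q')$, and the explicit exit times. This shows that the trajectory dies before completing the first period with respect to the nearby fraction $a/q$ or $a'/q'$. Self-intersections within that period are then exactly those of the closed orbit of slope $a/q$ (or $a'/q'$), perturbed slightly. For the closed rational orbit, the reduction above shows that each crossing point has exactly two preimages. Small perturbation preserves the transversality of these crossings, so each crossing still has multiplicity one. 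This completes the plan.
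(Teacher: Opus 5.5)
Your Steps 1 and 2 are correct and already form a complete proof for every $\varepsilon>0$. They are essentially the paper's argument carried out in the unfolded plane. A preimage in sign class $(\sigma_1,\sigma_2)$ is visited with folded velocity $(\sigma_1\cos\theta,\sigma_2\sin\theta)$, so your four classes are the paper's four directed lines. A repeated class is its ``periodic'' case, an antipodal pair is its ``reversed'' case, and your pigeonhole over the two antipodal pairs is its pigeonhole over the slopes $\pm\tan\theta$. Your lattice-vector computation (the difference, resp.\ the sum, of two points of the ray is a nonzero even lattice point on the ray) makes both cases precise, which the paper's verbal periodicity argument does not quite do.

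The final paragraph, however, rests on a misdiagnosis and should be dropped. The claim that for irrational $s$ the sum is not exactly on the line is false: $\ell(t_1)+\ell(t_2)=\ell(t_1+t_2)$, and the coordinates $x_1,x_2$ cancel exactly. So for irrational $\tan\theta$, Steps 1 and 2 give an outright contradiction (a nonzero lattice point on a line of irrational slope) without any reference to the pockets. For $\tan\theta=a/q$, the only pocket fact needed is that $(q,a)$ lies both on the ray and in the closed diamond $C_\varepsilon+(q,a)$. Hence the particle is absorbed at or before time $|(q,a)|$ for every $\varepsilon>0$, which is exactly the rephrasing your argument requires. The perturbative patch is therefore unnecessary. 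As sketched, it also leans on Theorem~\ref{thm:asymptotic of self-intersection count}, whose proof in the paper invokes this very lemma.
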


\begin{proof}
Fix an initial shooting angle $\theta \in [0, \frac{\pi}{2}]$. By the property of specular reflection, up to parallelism (i.e., ignoring the direction sign), there are exactly two cases for the trajectory of the particle along which the particle can move after each bounce: one with slope $\tan{\theta}$ and the other with slope $-\tan{\theta}$. Moreover, the angle between these two lines is $2\theta$, measured from the horizontal. Therefore, two consecutive line segments in a trajectory cannot be parallel, unless the initial angle is $90^\circ$. Taking the directions into account, there are four directed lines, namely 
\[\overrightarrow{\ell}
_{\tan\theta},\ \overleftarrow\ell_{\tan\theta},\ \overrightarrow{\ell}_{-\tan\theta},\ \overleftarrow{\ell}_{-\tan\theta}.\]

Suppose the trajectory has passed through the point $(x,y)\in\Omega$ Without loss of generality, we may assume the corresponding line is $\overrightarrow{\ell}
_{\tan\theta}$. Based on the above discussion, there are exactly four ways that the point $(x,y)$ could be re-encountered:

If the particle returns along $\overleftarrow\ell_{\tan\theta}$, then the trajectory would be reversed, eventually falling back into the starting point. This forces the trajectory to end at a visible point with both coordinates even, which is impossible because the two coordinates of any visible point must be coprime. See detailed discussion, for example, in \cite[Page 26]{AlkanLedoanZaha}. 

If the particle returns along $\overrightarrow{\ell}
_{\tan\theta}$, then the trajectory would be periodic. We claim that such period always contains the starting point, which leads to the same contradiction as in the first case. To see this, suppose we have two distinct trajectories that eventually have the same repeating pattern after the $n$-th bounce. Recall the discussion in the beginning, the segment in the ($n+1$)-th bounce must be parallel to that of ($n-1$)-th bounce. This forces the ($n-1$)-th bounce of the two distinct trajectories to be parallel. However, recall that up to parallelism, there are exactly two distinct lines. Since both the $n$-th and ($n-1$)-th bounces of the two trajectories are parallel, the initial angle $\theta$ is the same, which yields a unique trajectory.

Therefore, a self-intersection point can occur only if the particle passes along one of the two lines with slope $-\tan\theta$. For the particle to pass through $(x,y)$ a third time, it would again have to follow one of the four directed lines. But as argued above,  any such return would force a periodic or reversed trajectory, leading to a contradiction. Therefore, the particle can pass through any point at most twice, so the multiplicity of each self-intersection point is exactly one. 
\end{proof}

In the remainder of this section, we prove the first statement of Theorem \ref{thm:asymptotic of self-intersection count}. We first introduce several preliminary lemmas. 
\begin{lem}\label{lem:unique visible point}
    Let $a,q$ be positive integers such that gcd$(a,q) = 1$. Suppose the particle is shot from the origin with slope $\tan(\theta) = a/q$. There is exactly one visible point, namely $(q,a)$. Moreover, the trajectory does not enter a pocket before reaching $(q,a)$ if and only if $\varepsilon < \min(1/a,\ 1/q)$.
    \end{lem}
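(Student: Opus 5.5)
We work in the unfolded plane, where the trajectory with slope $a/q$ becomes the ray $\{t(q,a): t\ge 0\}$ and a pocket is hit exactly when this ray meets $\mathcal{C}=C_\varepsilon+\Z^2$. The lemma has two assertions. The first is that $(q,a)$ is the only visible point involved, i.e.\ the first lattice point on the ray. The second is the pocket criterion.

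For the first assertion, a lattice point $(m,n)$ on the ray satisfies $mq^{-1}=na^{-1}=t$. Since $\gcd(a,q)=1$, this gives $(m,n)=k(q,a)$ with $k$ a positive integer. So the lattice points on the ray are exactly the multiples of $(q,a)$, and the only visible one is $(q,a)$; it is the first lattice point reached. This is routine.

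For the pocket criterion, we look at the segment from the origin to $(q,a)$, excluding its endpoints (the start is the launch corner and the end is the target). We must decide when this segment avoids every diamond $C_\varepsilon+(m,n)$. A point $t(q,a)$ lies in the diamond at $(m,n)$ iff $|tq-m|+|ta-n|\le\varepsilon$. The key step is to find the minimum of $|tq-m|+|ta-n|$ over $0<t<1$ and $(m,n)\in\Z^2\setminus\{(0,0),(q,a)\}$.

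Fix a lattice point. As a function of $t$ the expression is convex and piecewise linear, so its minimum occurs where $tq=m$ or $ta=n$. At $t=m/q$ the value is $|ma/q-n|=|ma-nq|/q$. Because $\gcd(a,q)=1$ and $(m,n)$ is not on the ray, $ma-nq$ is a nonzero integer, so the value is at least $1/q$. Similarly, at $t=n/a$ the value is at least $1/a$. Hence the minimal distance is $\min_{m}\operatorname{dist}(ma/q,\Z)$ over $1\le m\le q-1$, together with the analogous quantity in $a$. Together with the value $1/\max(a,q)$ coming from the crossings of vertical/horizontal lines, this is the quantity to pin down.

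This is the main obstacle. Since $a$ is invertible mod $q$, some $m$ has $ma\equiv\pm1 \pmod q$, which makes the distance exactly $1/q$; symmetrically, some point gives distance exactly $1/a$. So the true minimum is $1/\max(a,q)$, which would give the criterion $\varepsilon<1/\max(a,q)=\min(1/a,1/q)$, as stated. I would check the case $a\le q$ directly: the lattice point $(m,n)$ with $ma-nq=\pm1$ and $0<m<q$ yields the distance $1/q$. I also need to confirm that the $1/a$ candidates never go below $1/q$: $|na'|$-type values are at least $1/a\ge1/q$, so they do not.

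Finally I would handle the boundary cases. At $\varepsilon=1/q$ the ray touches a pocket, since pockets are closed, which explains the strict inequality. I would also treat the degenerate case $a=1$ or $q=1$ separately, where no interior lattice crossing exists and the condition reduces to $\varepsilon<1$.
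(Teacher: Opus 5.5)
Your proposal is correct and follows essentially the paper's route. The two kink values of the convex function $t\mapsto |tq-m|+|ta-n|$ are exactly the vertical and horizontal distances $|ma-nq|/q$ and $|ma-nq|/a$ that the paper minimizes to $1/q$ and $1/a$. Your convexity and modular-inverse steps supply the justifications the paper leaves implicit.

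One slip to fix: only $a=q=1$ is genuinely degenerate, giving $\varepsilon<1$. For $a=1<q$ the pocket at $(1,0)$ lies at vertical distance $1/q$ from the trajectory, so the condition there is $\varepsilon<1/q$, not $\varepsilon<1$. Your $a\le q$ argument with $(m,n)=(1,0)$ already gives this.
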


\begin{proof}
   The trajectory in the unfolded plane is given by the line $y=(a/q)x$. Recall that the plane also has $\varepsilon$-sized triangular pockets at the four corners of each unit square, forming an $\varepsilon$-diamond shape around each lattice point. 
   
   Consider a lattice point $(m,n)$ in the plane. The vertical distance between the point and the line is $|\frac{a}{q}m - n|$ and the horizontal distance is $|\frac{q}{a}n - m|$. The lattice point $(q,a)$ is visible if and only if the vertical and the horizontal distances are both greater than $\varepsilon$ for all $(m,n)$ up to $(q,a)$; otherwise, the trajectory ends in a pocket before $(q,a)$. Hence, for $(q,a)$ to be the only visible point, we must have
   \begin{align*}
       \min_{0 < m < q}\big|\frac{a}{q}m-n\big| > \varepsilon \textrm{ and } \min_{0 < n < a}\big|\frac{q}{a}n-m\big| > \varepsilon. 
   \end{align*}
Since $\min|(a/q)m-n| = 1/q$ and $\min|(q/a)n-m| = 1/a$, taking the smaller of the two concludes the proof.

\end{proof}

\begin{lem}\label{lem: terminated pocket}
     Let $a,q$ be positive integers such that gcd$(a,q) = 1$. Suppose the particle is shot from the origin with slope $\tan(\theta) = a/q$ and $\varepsilon < \min(1/a,1/q)$. Then the trajectory terminates in a corner pocket determined by the parity of $a$ and $q$: if $a,q$ are both odd, then the particle ends at the top-right corner pocket. If $a$ is odd and $q$ is even, then the particle ends at the top-left corner pocket. If $a$ is even and $q$ is odd, then the particle ends at the bottom-right pocket.
\end{lem}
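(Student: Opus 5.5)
By Lemma \ref{lem:unique visible point}, when $\varepsilon<\min(1/a,1/q)$ the unfolded trajectory $y=(a/q)x$ meets no pocket before the lattice point $(q,a)$, and it enters the diamond $C_\varepsilon+(q,a)$ there. The statement then reduces to identifying which corner of the original unit square corresponds to $(q,a)$ under folding. The approach is to make the folding map explicit and evaluate it at $(q,a)$.

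The folding map $\pi:\R^2\to[0,1]^2$ acts coordinatewise by the tent map of period $2$:
\[
x\mapsto \operatorname{dist}(x,2\Z),\qquad y\mapsto\operatorname{dist}(y,2\Z).
\]
This is the map that composes the reflections across the lines $x=k$ and $y=k$ used to unfold the billiard. Two facts need checking.

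\begin{enumerate}
\item The unfolded straight line corresponds to the billiard path under $\pi$. Each crossing of a vertical line $x=k$ reverses the horizontal velocity, and each crossing of a horizontal line $y=k$ reverses the vertical velocity. This is exactly specular reflection off the vertical and horizontal sides, respectively.
\item $\pi$ carries the diamond $C_\varepsilon+(m,n)$ onto the triangular pocket at the corner $(m \bmod 2,\ n\bmod 2)$. A neighbourhood of $(m,n)$ folds onto a neighbourhood of that corner, and a diamond of size $\varepsilon$ becomes the triangle $|x-x_0|+|y-y_0|\le\varepsilon$ inside the square.
\end{enumerate}

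Applying $\pi$ to $(q,a)$ gives the corner $(q\bmod 2,\ a\bmod 2)$. If $a$ and $q$ are both odd, this is $(1,1)$, the top-right pocket. If $q$ is even and $a$ is odd, it is $(0,1)$, the top-left pocket. If $q$ is odd and $a$ is even, it is $(1,0)$, the bottom-right pocket. Both coordinates cannot be even because $\gcd(a,q)=1$; this excludes a return to the starting corner, consistent with the discussion in Lemma \ref{lem:unique intersection point}.

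The only step requiring any care is the identification of $\pi$ with the reflection group, in particular that the quotient $\R^2/2\Z^2$ tracks orientation correctly. This is exactly why the $2\Z^2$ quotient appears in the definition of $\Omega$. The rest is immediate parity bookkeeping.
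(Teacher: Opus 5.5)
Your proof is correct and takes the same approach as the paper. You use Lemma~\ref{lem:unique visible point} to see that the unfolded line first reaches a pocket at the diamond around $(q,a)$, then fold modulo $2$ and read off the corner from the parities of $q$ and $a$. Your explicit coordinatewise tent map just combines the paper's two steps (reduction to the $2\times 2$ square, then folding onto the unit square) and makes precise that diamonds go to the corresponding corner pockets.
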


\begin{proof}
    By Lemma \ref{lem:unique visible point}, the trajectory reaches $(q,a)$ first. Reflecting coordinates modulo $2$ maps the point into the two-by-two square positioned at the origin. The parity of $a$ and $q$ determines which quadrant it lies in, corresponding exactly to one corner pocket as listed. 
\end{proof}

\begin{coro}\label{coro:never initial pocket}
    Let $a,q$ be positive integers such that gcd$(a,q) = 1$. Suppose the particle is shot from the origin with slope $\tan(\theta) = a/q$ and $\varepsilon < \min(1/a,1/q)$. Then the trajectory never terminates in the initial bottom-left corner pocket. 
\end{coro}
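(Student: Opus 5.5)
The plan is to argue by parity, combining Lemma \ref{lem:unique visible point} with Lemma \ref{lem: terminated pocket}. By Lemma \ref{lem:unique visible point}, since $\varepsilon<\min(1/a,1/q)$, the unfolded trajectory $y=(a/q)x$ avoids every $\varepsilon$-diamond around the lattice points $(m,n)$ with $0<m<q$. Hence the first diamond it enters, after leaving the origin, is the one centred at the visible point $(q,a)$. Which physical pocket this diamond corresponds to is found by folding back to the fundamental domain, that is, by reducing $(q,a)$ modulo $2\Z^2$.

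The key steps are as follows.

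\emph{Step one.} The folding map $\R^2\to[0,1]^2$, given by reflection in each coordinate, sends a lattice point $(m,n)$ to the corner $(m \bmod 2,\ n \bmod 2)$ of the unit square. Consequently, the diamond at $(m,n)$ is mapped onto the corner pocket at that corner. In particular, the bottom-left pocket $(0,0)$ is the image exactly of those diamonds centred at points with $m$ and $n$ both even.

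\emph{Step two.} Since $\gcd(a,q)=1$, the integers $a$ and $q$ cannot both be even. Therefore $(q \bmod 2,\ a \bmod 2)\neq(0,0)$, and the terminal pocket is one of the three cases listed in Lemma \ref{lem: terminated pocket}, none of which is the bottom-left pocket.

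\emph{Step three.} We must also make sure the particle does not re-enter the initial pocket at some intermediate time. By Step one, such a re-entry would mean the trajectory meets a diamond at a lattice point $(m,n)$ with $0<m<q$. Lemma \ref{lem:unique visible point} excludes this, since $|(a/q)m-n|\ge 1/q>\varepsilon$ for all such $m$, and likewise for the horizontal distances.

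The only point that needs care is the starting moment. The particle is launched from inside the diamond at the origin, and this must not count as a termination. This is the convention already used in Lemma \ref{lem:unique visible point}, where only the lattice points with $0<m\le q$ are considered. With that convention in place, the corollary follows directly from Lemma \ref{lem: terminated pocket}, because coprimality rules out the even–even parity class.
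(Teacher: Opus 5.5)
Your proposal is correct and follows essentially the paper's argument. The first pocket reached is the one at $(q,a)$, its corner is fixed by the parities of $q$ and $a$, and coprimality excludes the even--even class that corresponds to the bottom-left pocket; your Steps one and three only spell out the folding map and the absence of earlier pocket hits, which the paper leaves implicit in Lemmas \ref{lem:unique visible point} and \ref{lem: terminated pocket}.
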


\begin{proof}
    Since gcd$(a,q)=1$, at least one of $a$ and $q$ is odd. Therefore, the trajectory cannot terminate at the bottom-left pocket, which requires both to be even. 
\end{proof}


\begin{lem}\label{lem:formula for intersection points}
   Let $a,q$ be positive integers such that gcd$(a,q) = 1$. Suppose the particle is shot from the origin with slope $\tan(\theta) = a/q$ and $\varepsilon < \min(1/a,1/q)$.  Then the number of self-intersection points in the billard table is given by 
   \[
   I_\varepsilon\left(\frac{a}{q}\right) = \frac{(a-1)(q-1)}{2}.
   \]
\end{lem}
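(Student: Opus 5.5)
Our plan is to count self-intersections by working in the unfolded picture and then folding back with the map $\pi:\R^2\to[0,1]^2$, $\pi(x,y)=(\|x\|_2,\|y\|_2)$, where $\|t\|_2$ denotes the distance from $t$ to the nearest even integer, reflected into $[0,1]$. By Lemma \ref{lem:unique visible point}, the hypothesis $\varepsilon<\min(1/a,1/q)$ guarantees that the trajectory is exactly the segment $\sigma=\{(tq,ta):0\le t\le 1\}$, ending at $(q,a)$ in a corner pocket (Lemma \ref{lem: terminated pocket}). A self-intersection point is then a point $p\in(0,1)^2$ with $p=\pi(\sigma(t_1))=\pi(\sigma(t_2))$ for some $t_1\neq t_2$. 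By Lemma \ref{lem:unique intersection point}, each such $p$ arises from exactly one unordered pair $\{t_1,t_2\}$, so it suffices to count these pairs.

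\textbf{Step 1: reduce to an arithmetic condition.} Writing $\sigma(t)=(tq,ta)$, the equality $\pi(\sigma(t_1))=\pi(\sigma(t_2))$ means $t_1q\equiv \pm t_2q \pmod 2$ and $t_1a\equiv \pm t_2a\pmod 2$, with independent signs. As in the proof of Lemma \ref{lem:unique intersection point}, the same-sign cases (both $+$ or both $-$) would force the two passages to be parallel, which leads to periodicity or reversal and is excluded. So we may assume the signs are opposite, say $t_1q+t_2q\in 2\Z$ and $t_1a-t_2a\in 2\Z$ (the symmetric case is analogous). Since $0<|t_1-t_2|<1$ and $0<t_1+t_2<2$, we set $t_1-t_2=2j/a$ and $t_1+t_2=2k/q$ for integers $j\neq 0$ and $k$.

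\textbf{Step 2: count the pairs.} Solving gives $t_{1,2}=k/q\pm j/a$. The constraints $t_1,t_2\in(0,1)$, together with the requirement that the point avoid the lattice (handled by coprimality), translate into counting lattice points $(j,k)$ in the triangle region determined by $|j|/a<k/q<1-|j|/a$. Equivalently, it is cleaner to parametrize by the grid lines: the folded trajectory consists of $a$ segments of slope $+a/q$ and, after reflection, segments of slope $-a/q$. The intersection points in the unfolded coordinates correspond to points $(x,y)$ with $qy\pm ax\in 2\Z$ in the appropriate range. The count is then the number of integer pairs $(m,n)$ with $1\le m\le q-1$, $1\le n\le a-1$ and $m+n$ even (or of a fixed parity), taken modulo the identification $t_1\leftrightarrow t_2$.

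\textbf{Step 3: evaluate.} The number of pairs $(m,n)\in[1,q-1]\times[1,a-1]$ is $(a-1)(q-1)$. Because $\gcd(a,q)=1$, the parity constraint, together with the swapping involution, cuts this in half exactly, giving $\frac{(a-1)(q-1)}{2}$. At least one of $a-1,q-1$ is even, which confirms integrality and also confirms that the involution has no fixed points, since a fixed point would mean $t_1=t_2$. We must also check that no intersection point lies on $\partial\Omega$ or inside a pocket. Boundary points correspond to reflection points, which are visited once because consecutive segments are not parallel. Pocket points are excluded by $\varepsilon<\min(1/a,1/q)$, since $\min|(a/q)m-n|=1/q$ for $0<m<q$.

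\textbf{Main obstacle.} The main obstacle will be the bookkeeping in Steps 1–2, namely turning the congruence conditions into a clean bijection with half of the grid $[1,q-1]\times[1,a-1]$. As a sanity check, we would first verify the bijection in small cases, $a/q=1/2,\,2/3,\,3/5$, where the counts should be $0$, $1$ and $4$, and then fit the general parametrization to match.
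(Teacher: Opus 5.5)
\textbf{There is a genuine gap: the count in Steps 2--3 is asserted, not proved.} Your Step 1 and the first half of Step 2 follow the paper's route (its Cases 3 and 4): you correctly reach $t_{1,2}=k/q\pm j/a$ with $|j|/a<k/q<1-|j|/a$. But you never count these lattice points. Instead you switch to a ``grid lines'' description and claim that the self-intersections correspond to half of $[1,q-1]\times[1,a-1]$. You give no map from intersection points (or from pairs $\{t_1,t_2\}$) to pairs $(m,n)$, let alone a proof that it is a bijection. By your own account this is to be fitted to small cases rather than derived, yet it is the entire content of the lemma.

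The halving in Step 3 is also inconsistent. Since one of $a-1,q-1$ is even, the condition $m\equiv n\pmod 2$ alone already leaves exactly $\frac{(a-1)(q-1)}{2}$ pairs. Quotienting further by $t_1\leftrightarrow t_2$ would give $\frac{(a-1)(q-1)}{4}$, which fails your own test case $a/q=3/5$. So either the pairs $(m,n)$ label points, and no involution may be applied, or they label ordered pairs of times, and then the parity count is the wrong one. Separately, the parity of $a-1$ or $q-1$ has nothing to do with whether $t_1\leftrightarrow t_2$ has fixed points; that is simply $t_1\neq t_2$.

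\textbf{Closing it along the paper's lines.} Take $j\ge 1$; this is the only place the swap is used. The solutions are then the lattice points of the open triangle with vertices $(0,0)$, $(0,q)$, $(a/2,q/2)$ in the $(j,k)$-plane. The other sign choice, $t_1-t_2=2j'/q$, $t_1+t_2=2k'/a$ with $j'\ge 1$, gives the open triangle with vertices $(0,0)$, $(a,0)$, $(a/2,q/2)$ in the $(k',j')$-plane. These two triangles glue along the segment from $(0,0)$ to $(a/2,q/2)$, which contains no lattice points because $\gcd(a,q)=1$. Together they form the open right triangle with legs $a$ and $q$, which has $\frac{(a-1)(q-1)}{2}$ interior lattice points by Pick's theorem. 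No pair satisfies both sign conditions: that would force $t_1q,\,t_1a\in\Z$, hence $t_1\in\Z$, which is impossible for $0<t_1<1$.

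\textbf{Alternatively, making your grid idea rigorous.} This gives a clean, genuinely different proof. In the folded square with coordinates $(u,v)$, the chords of slope $a/q$ lie on lines $qv-au=2c$ and those of slope $-a/q$ on lines $qv+au=2d$. Two such lines cross at $\bigl(\frac{d-c}{a},\frac{c+d}{q}\bigr)$. Putting $n=d-c$ and $m=c+d$, the self-intersection points are exactly the points $(n/a,m/q)$ with $1\le n\le a-1$, $1\le m\le q-1$, $m\equiv n\pmod 2$. There are $\frac{(a-1)(q-1)}{2}$ of them, with no further halving. What you must add is that every such line meeting the open square is actually traversed. This holds because:
\begin{itemize}
\item there are exactly $a+q-1$ such lines;
\item this equals the number of chords, since the unfolded segment crosses $q-1$ vertical and $a-1$ horizontal grid lines at distinct points;
\item distinct chords of equal slope lie on distinct lines, by your same-sign exclusion in Step 1.
\end{itemize}
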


\begin{proof}
    With the given assumptions, the trajectory in the unfolded plane is given by $(x(t),y(t)) = (t,(a/q)t)$. By Lemmas \ref{lem:unique visible point}, \ref{lem: terminated pocket}, and Corollary \ref{coro:never initial pocket}, the unfolded trajectory reflects into the two-by-two square, containing the initial unit square centered at the origin.  

    Now consider a point $(x,y)$ inside the two-by-two square. In order to map such a point back into our initial unit square, we reflect the $x$-coordinate with respect to the vertical axis if $x \in [1,2]$, and the $y$-coordinate with respect to the horizontal axis if $y \in [1,2]$. Hence, given any $(x,y)$ in the two-by-two square, the mapping $(1-|1-x|),(1-|1-y|)$ maps the point into the initial unit square.

    Suppose $P_1(t_1) = (x_1,y_1)$ and $P_2(t_2) = (x_2,y_2)$ are points in the unfolded plane with $t_{1} < t_{2}$. Then $P_1(t_{1})$ intersects  $P_2(t_{2})$ if and only if
    \[\begin{cases}
        1-|1-x_{1}|\pmod 2 = 1-|1-x_{2}|\pmod2, \\
        1-|1-y_{1}|\pmod2 = 1-|1-y_{2}|\pmod2.
    \end{cases}\]

    Simplifying the absolute value gives the following cases:
    
    \textbf{Case 1:} 
    \begin{align}\label{case 1 step 1}
        \begin{cases}
            1-x_{1}\pmod2 = 1-x_{2}\pmod2, \\
        1-y_{1}\pmod2 = 1-y_{2}\pmod2.
        \end{cases}
    \end{align}

    Equation \eqref{case 1 step 1} is equivalent to the congruence 
    \begin{align*}
        \begin{cases}
            x_{1}-x_{2} \equiv  0 \pmod2,\\
            y_{1}-y_{2} \equiv  0 \pmod2.
        \end{cases}
    \end{align*}
Writing both $P_{1}$ and $P_{2}$ in terms of $t_{1}$ and $t_{2}$, we get $t_{1}-t_{2} = 2k$ for the first equation and $(a/q)(t_{1}-t_{2}) = 2h$ for the second equation with $k,h \in \mathbb{Z}$. Substituting the first equation into the second gives $ak-hq = 0$, which implies $k = q$ and $h = a$. However, from $t_{1}-t_{2} = 2k$, we have $t_{1} = 2q+t_{2}$, which is a contradiction since $0 \le t_{1} \le q$.\\

    \textbf{Case 2:} 
    \begin{align}\label{case 2 step 2}
        \begin{cases}
            1-x_{1}\pmod2 = -1+x_{2}\pmod2, \\
        1-y_{1}\pmod2 = -1+y_{2}\pmod2.
        \end{cases}
    \end{align}

    Equation \eqref{case 2 step 2} is equivalent to the congruence 
    \begin{align*}
        \begin{cases}
            x_{1}+x_{2} \equiv  0 \pmod2,\\
            y_{1}+y_{2} \equiv  0 \pmod2.
        \end{cases}
    \end{align*}

    Following the same reasoning as in case 1, we obtain a contradiction for case 2 as well.\\

    \textbf{Case 3:} 
    \begin{align}\label{case 3 step 3}
        \begin{cases}
            1-x_{1}\pmod2 = 1-x_{2}\pmod2, \\
        1-y_{1}\pmod2 = -1+y_{2}\pmod2.
        \end{cases}
    \end{align}

    Equation \eqref{case 3 step 3} is equivalent to the congruence equations given by
    \begin{align*}
        \begin{cases}
            x_{1}-x_{2} \equiv  0 \pmod2,\\
            y_{1}+y_{2} \equiv  0 \pmod2.
        \end{cases}
    \end{align*}
   Substituting $x_{1} = t_{1}, x_{2} = t_{2}$ and $y_{1} = (a/q)t_{1}, y_{2} =(a/q)t_{2}$, we obtain $t_{1}-t_{2} = 2k$ and $\frac{a}{q}(t_{1}+t_{2}) = 2h$ for some $k,h \in \mathbb{Z}$. Solving for $t_{1}$ and $t_{2}$ gives us the following:
    \begin{align*}
        \begin{cases}
            t_{1} = k+\frac{qh}{a},\\
            t_{2} = -k+\frac{qh}{a}.
        \end{cases}
    \end{align*}
    Excluding the start and pocket positions, since $0 < x_{1} < y_{2} < q$, we have $0 < t_{1} < t_{2} < q$. Therefore, the inequality becomes 
      \begin{align}
            0 < ak+qh < qh-ak < aq.\label{eq:double inequalities}
    \end{align}
Substituting $K=-k$, the inequalities in \eqref{eq:double inequalities} correspond exactly to the restrictions 
\[
K>0, \text{ and } \frac{a}{q}K<h<-\frac{a}{q}K+a.
\]
 Therefore, the integer solutions $(K,h)$ correspond exactly to the interior lattice points in the triangle with vertices $(0,0)$, $(0,a)$, and $(q/2,a/2)$. \\

    \textbf{Case 4:} 
    \begin{align}\label{case 4 step 4}
        \begin{cases}
            1-x_{1}\pmod2 = -1-x_{2}\pmod2, \\
        1-y_{1}\pmod2 = 1+y_{2}\pmod2.
        \end{cases}
    \end{align}
    This is equivalent to
    \begin{align*}
        \begin{cases}
            x_{1}+x_{2} \equiv  0 \pmod2,\\
            y_{1}-y_{2} \equiv  0 \pmod2.
        \end{cases}
    \end{align*}
By the same argument as in Case 3, we get
    \begin{align*}
        \begin{cases}
            t_{1} = k+\frac{qh}{a},\\
            t_{2} = k-\frac{qh}{a}.
        \end{cases}
    \end{align*}
 Hence we obtain the following inequalities:
      \begin{align*}
            0 < ak+qh < ak-qh < aq.
    \end{align*}
Similar to Case 3, the integer solutions correspond to the interior lattice points of the triangle with vertices $(0,0)$, $(q,0)$, and $(q/2,a/2)$. Now combining the two regions arising in Cases 3 and 4, we obtain precisely the interior of the triangle with vertices $(0,0)$,$(q,0)$, and $(0,a)$.
Applying Pick's Theorem to this triangle, we obtain that the number of interior lattice points is $(a-1)(q-1)/2$, which is exactly the number of self-intersection points. This concludes the proof.
\end{proof}

Now we are ready to prove Theorem \ref{thm:asymptotic of self-intersection count}.

\begin{proof}[Proof of Theorem \ref{thm:asymptotic of self-intersection count}]
Assume $\theta\in [0,\pi/4]$. Consider the trajectory of the particle on the unfolded board. Since the unfolded billiard table is invariant under reflections across horizontal and vertical lines, it suffices to consider the pocket configuration where the terminal pockets lie in the upper-left and lower-right directions. The other possible orientations can be obtained by symmetry. Let $O = (0,0)$, $A=(q,a)$, $A'=(q',a')$, $W = (q-\varepsilon,a)$, $W' = (q'-\varepsilon,a')$, $S'=(q',a'-\varepsilon)$, and $N=(q,a+\varepsilon)$. Then the triangular pocket adjacent to $A$, denoted by $P_A$, is the convex hull formed by vertices $A,W,N$. Similarly, denote the pocket adjacent to $(q',a')$ by $P_{A'}$, which is the convex hull formed by vertices $A', W', S'$. By \cite[Lemma 3.1]{BocaGologanZaha}, the unfolded path can hit only $P_A$ or $P_{A'}$, and no other earlier pockets. Without loss of generality, assume the unfolded path terminates in pocket $P_A$. For the rational slope $a/q$,  by Lemma \ref{lem:formula for intersection points},
\[
I_\varepsilon(a/q) = \frac{(a-1)(q-1)}{2}.
\]
We claim that the self-intersection point count for $\tan\theta$ is equal to the count for the rational slope $a/q$. Assume that there exists an additional self-intersection point. 
Since the trajectory with slope $a/q$ already accounts for all self-intersection points before $P_A$, this additional self-intersection point can occur only when the path passes over a lattice point in the interior of the triangle $\Delta OAW$. In other words, the path must pass through a lattice point corresponding to a rational slope $a''/q''$, where $a/q<a''/q''<\tan\theta$ and $q''\le Q$. However, by assumption, $a/q$ and $a'/q'$ are consecutive Farey fractions in $\mathcal{F}_Q$, so we conclude that no such lattice point can exist. Therefore,
\[
I_\varepsilon(\theta) = \frac{(a-1)(q-1)}{2}.
\]
It remains to prove which terminal pocket the particle falls in. We split the proof into three cases:
\begin{enumerate}
\item If $q<q'$:
Then one has $a<a'$, and therefore,
\[
t_A = \frac{a}{q}\le \frac{a}{q-\varepsilon}=t_W\le \frac{a+\varepsilon}{q}=t_N\le \frac{a'}{q'}=t_{A'}.
\]
If $\tan\theta\in [t_A, t_W)$, then the particle hits the horizontal side connecting $A$ and $W$ of $P_A$. Similarly, if $\tan\theta\in [t_W, t_N)$, then the particle hits the diagonal side of $P_A$. Therefore, in both cases the particle falls into pocket $P_A$. If $\tan\theta\in [t_N, t_{A'})$, the particle is above the top vertex $N$ of pocket $P_A$, and therefore it must hit pocket $P_{A'}$ instead. 
    \item If $q>q'$ and $t_{S'}\le t_W$: 
    If $\tan\theta\in [t_A,t_{S'})$, since $t_{S'}\le t_W$, the particle hits the horizontal side of pocket $P_A$. If $\tan\theta\in [t_{S'},t_{A'})$, the particle hits the vertical side of pocket $P_{A'}$.
    \item If $q>q'$ and $t_{S'}>t_W$: If $\tan\theta\in[t_A,t_W)$, then the particle hits the horizontal side of pocket $P_A$. If $\tan\theta\in[t_W, t_{S'})$, the particle hits the vertical side of pocket $P_A$. If $\tan\theta\in[ t_{S'}, t_{A'})$, then the particle hits the vertical side of pocket $P_{A'}$.
\end{enumerate}
This concludes all cases, and proves the result for $\theta\in[0,\pi/4]$.

Now suppose $\theta\in(\pi/4,\pi/2)$. Observe that 
\[
I_\varepsilon(\theta) = I_\varepsilon\left(\frac{\pi}{2}-\theta\right) 
\]
up to swapping the horizontal and vertical coordinates. Therefore, the corresponding adjacent terminal rational lattice point has coordinates swapped. This implies that the total count of self-intersection points is unchanged because 
\[
\frac{(a-1)(q-1)}{2} = \frac{(q-1)(a-1)}{2}. 
\]
Together with Lemma \ref{lem:unique intersection point}, this finishes the proof of Theorem \ref{thm:asymptotic of self-intersection count}.
\end{proof}

\section{The \texorpdfstring{$r$-th}{r-th} moment of self-intersection points}

In this section we discuss the corollaries presented in the introduction. The proofs follow similar arguments to those in \cite{BocaGologanZaha} and \cite{BocaGologanZaha2}. We therefore present only the detailed proof of Corollary \ref{thm:integration over angles of self-intersection count} here, which concerns the $r$-th moment of the number of self-intersection points, and leave the proofs of Corollary \ref{coro: measure converge}, Proposition \ref{prop: local distribution}, Corollary \ref{thm: explicit density} to the reader.

We begin by considering $\int_{L}I^{r}_{\varepsilon}(\theta)d \theta$, where  $L = [\alpha,\beta] \subseteq [0,\pi/4]$. Let $t_{1} = \tan(\alpha)$, $t_{2} = \tan(\beta)$, and $J = [t_{1},t_{2}] \subseteq [0,1]$. To set up, let the order of the Farey sequence be $Q =[1/\varepsilon]$.


    By Theorem \ref{thm:asymptotic of self-intersection count}, we can write the  integral over the self-intersection points by distinguishing cases by $q<q'$ or $q>q'$. Denote by $\mathcal{F}_{Q}^{<}$ the set of all consecutive pairs of Farey sequences $(a/q,a'/q')$ with $q<q'$, and by $\mathcal{F}_{Q}^{>}$ with $q>q'$. Moreover, we denote
\begin{align*}
    \sideset{^J}{}{\sum}_{a/q} := \sum_{\substack{(a/q,a'/q') \in \mathcal{F}_{Q}^{<} \\ a/q \in J}} \quad \text{and} \quad \sideset{}{^J}{\sum}_{a/q} := \sum_{\substack{(a/q,a'/q') \in \mathcal{F}_{Q}^{>} \\ a/q \in J}}.
\end{align*}

    The first reduction is similar to the methods used to prove \cite[Theorem 1.5]{BocaGologanZaha}, giving us
    \begin{align*}
         \int\limits_{L} I^{r}_{\varepsilon}(\theta) d\theta = A_{r,J,\varepsilon}+B_{r,J,\varepsilon}
    \end{align*}
    where 
    \begin{align*}
        A_{r,J,\varepsilon} = \sideset{^J}{}{\sum}_{a/q} \int_{w_{1}}^{w_{2}} I_{\varepsilon}^{r}(\theta)d\theta + \sideset{^J}{}{\sum}_{a/q} \int_{w_{2}}^{w_{3}} I_{\varepsilon}^{r}(\theta) d\theta
    \end{align*}
    and 
    \begin{align*}
        B_{r,J,\varepsilon} = \sideset{}{^J}{\sum}_{a/q} \int_{w_{1}}^{w_{4}} I_{\varepsilon}^{r}(\theta)d\theta  + \sideset{}{^J}{\sum}_{a/q} \int_{w_{4}}^{w_{3}} I_{\varepsilon}^{r}(\theta) d\theta
    \end{align*}

    with $w_{1}$, $w_{2}$, $w_{3}$, and $w_{4}$ given by 
    \begin{align*}
        w_{1} = \arctan\Big(\frac{a}{q}\Big), w_{2} = \arctan\Big(\frac{a+\varepsilon}{q}\Big), w_{3} = \arctan\Big(\frac{a'}{q'}\Big),\text{ and }\ w_{4} = \arctan\Big(\frac{a'-\varepsilon}{q'}\Big).
    \end{align*}

    Due to the symmetric nature of $A_{r,J,\varepsilon}$ and $B_{r,J,\varepsilon}$, it suffices to only consider  $A_{r,J,\varepsilon}$.
    Evaluating the inner integrals in  $A_{r,J,\varepsilon}$ gives  
    \begin{align}\label{eq:A}
        A_{r,J,\varepsilon} = \sideset{^J}{}{\sum}_{a/q} \left(\frac{(a-1)^{r}(q-1)^{r}}{2^{r}}\right)\left(\arctan\Big(\frac{a+\varepsilon}{q}\Big)- \arctan\Big(\frac{a}{q}\Big)\right) 
        \qquad
        \\
        \notag
        \qquad
        +\sideset{^J}{}{\sum}_{a/q} \left(\frac{(a'-1)^{r}(q'-1)^{r}}{2^{r}}\right)\left(\arctan\Big(\frac{a'}{q'}\Big)- \arctan\Big(\frac{a+\varepsilon}{q}\Big)\right).
    \end{align}

    Applying Lemma \ref{lem:lemma 2.1}, we have the inequality $q+q' \ge Q+1 > \frac{1}{\varepsilon}$. Together with Lemma \ref{lem:property for asymptotic}, upon simplication, we obtain
    \begin{align*}
        A_{r,J,\varepsilon} = \frac{1}{2^{r}} 
        \sideset{^J}{}{\sum}_{a/q}\frac{\big( \frac{a}{q} \big)^{r}}{1+ \big( \frac{a}{q} \big)^{2}} \left( \varepsilon q^{2r-1}
        + q'^{2r-1} \frac{1-\varepsilon q'}{q} \right) + E_{1}
        + O_{r} \left( Q^{2r-1} \right),
    \end{align*}
    where $E_{1} = O_{r}\big( \sideset{^J}{}{\sum}_{a/q} Q^{2r-3} \big) = O_{r}(Q^{2r-1})$, since $|\mathcal{F}_{Q}|= O(Q^{2})$. Therefore, we get  
    \begin{align}\label{eq:A final reduction}
        A_{r,J,\varepsilon} = \frac{1}{2^{r}} 
        \sideset{^J}{}{\sum}_{a/q}\frac{\big( \frac{a}{q} \big)^{r}}{1+ \big( \frac{a}{q} \big)^{2}} \left( \varepsilon q^{2r-1}
        + q'^{2r-1} \frac{1-\varepsilon q'}{q} \right)  
        + O_{r} \left( Q^{2r-1} \right).
    \end{align}
    
    Denote the inverse of $x$ modulo $q$ by $\overline{x}$ and $J_{q} := [(1-t_{2})q, (1-t_{1})q]$. By Lemma \ref{lem:lemma 2.1}, we have $a'q - aq' = 1$. Converting it into congruences, the main term of \eqref{eq:A final reduction} becomes
    \begin{align*}
        A_{r,J,\varepsilon}^{(1)} := \sum_{q= 1}^{Q}\sum_{\substack{\max\{Q-q,q\} < x \le Q \\ \overline{x} \in J_{q}}} f(x, \overline{x};q),
    \end{align*}
    where
    \begin{align*}
        f(x,y;q) :=& \frac{1}{2^{r}} 
        \frac{\big( \frac{q-y}{q} \big)^{r}}{1+ \big( \frac{q-y}{q} \big)^{2}} \left( \varepsilon q^{2r-1}
        + x^{2r-1} \frac{1-\varepsilon x}{q} \right) 
       =\frac{1}{2^{r}} 
        \frac{q^{r+1}(q-y)^{r}}{q^{2} + (q-y)^{2}} \left( \varepsilon
        + \frac{1-\varepsilon x}{q} \left(\frac{x}{q}\right)^{2r-1}   \right) ,
    \end{align*}
and the intervals $\mathcal{I}, \mathcal{J}$ are given by
\begin{align*}
     \mathcal{I} =& (\max\{Q-q,q\},Q], \quad\text{and }\ \mathcal{J} = J_{q}.
\end{align*}
    For $r\ge 1$, since $f$ is $C^1$ and $0\le 1-\varepsilon x\le \varepsilon(q+1)\ll q/Q$, we obtain the following estimates   
    \begin{align*}
        \|f\|_{\infty, \mathcal{I} \times \mathcal{J}} \ll_{r}& \frac{q^{r}(q-y)^{r}}{q^{2}+\left({q-y}\right)^{2}} \ll_{r} Q^{2r-2},
        \\
        \left\| \frac{\partial f}{\partial x}\right\|_{\infty, \mathcal{I} \times \mathcal{J}} \ll_{r}& \frac{1}{q^2} \frac{q^{r+1}q(q-y)^{r}}{q^{2}+\left({q-y}\right)^{2}} \ll_{r} \frac{Q^{2r-2}}{q},
        \\
       \textrm{and }\quad \left\| \frac{\partial f}{\partial y}\right\|_{\infty, \mathcal{I} \times \mathcal{J}} \ll_{r}& \frac{1}{q}\frac{q^{r+1}(q-y)^{r+1}}{(q^{2}+(q-y)^{2})^{2}} \ll_{r} \frac{Q^{2r-2}}{q}.
    \end{align*}
    Applying Lemma \ref{lem:zaha lemma 2.2} with $T = [Q^{\alpha}]$, and using the properties $|\mathcal{J}| = |J|q = Lq$ and $|\mathcal{I}| \le Q$, we simplify the inner sum of $A^{(1)}_{r,J,\varepsilon}$ as
    \begin{align*}
        &\frac{1}{2^{r}}\frac{\varphi(q)}{q^{2}} \int\limits_{\mathcal{I}} \left(\varepsilon + \frac{1-\varepsilon x}{q} \left(\frac{x}{q}\right)^{2r-1} \right)dx \int\limits_{\mathcal{J}} 
        \frac{q^{r+1}(q-y)^{r}}{q^{2} + (q-y)^{2}}dy
        \\
        \qquad
        &\quad+O_{\alpha,r,\delta} \left( Q^{2\alpha}q^{\frac{1}{2} + \delta} Q^{2r-2} + Q^{\alpha} q^{\frac{1}{2} + \delta} Q^{2r-2}+|L|Q^{-\alpha}qQ^{2r-2}\right)
        \\
        =& \frac{1}{2^{r}} \frac{\varphi(q)}{q}q^{2r-1} K_{r,q} Y_{r,L} + O_{\alpha,r,\delta} \left( Q^{2 \alpha +2r+\delta-\frac{3}{2}} +|L|Q^{-\alpha+2r-1}\right),
    \end{align*}
    where 
    \begin{align*}
        K_{r,q} &= \int\limits_{\max\{Q-q,q\}}^{Q}\left(\varepsilon + \frac{1-\varepsilon x}{q} \left(\frac{x}{q}\right)^{2r-1} \right)dx 
    \end{align*}
    and
    \begin{align*}
        Y_{r,L} & = \frac{1}{q^{2r-1}}\int\limits_{(1-t_{2})q}^{(1-t_{1})q} \frac{q^{r}(q-y)^{r}}{q^{2}+(q-y)^{2}}dy
        \\
        &= \frac{1}{q^{2r-1}}\int\limits_{t_{1}q}^{t_{2}q} \frac{q^{r+1}u^{r}}{q^{2}+u^{2}}du = \int\limits_{t_{1}}^{t_{2}} \frac{v^{r}}{1+v^{2}}dv 
       = \int\limits_{L} \tan^{r}(\omega) d\omega.
    \end{align*}
    As a result, for the inner sum of $A_{r,J,\varepsilon}^{(1)}$, we have    
    \begin{align}
        \sum_{\substack{\max\{Q-q,q\} < x \le Q \\ \overline{x} \in J_{q}}} f(x, \overline{x};q) = \frac{1}{2^{r}} \frac{\varphi(q)}{q} G_{r,q} Y_{r,L} + O_{\alpha,r,\delta} \left( Q^{2 \alpha +2r+\delta-\frac{3}{2}} +|L|Q^{-\alpha+2r-1}\right),
    \end{align}
    where 
    \begin{align*}
    G_{r,q} &:= \int\limits_{\max\{Q-q,q\}}^{Q}\left(\varepsilon q^{2r-1} + \frac{1-\varepsilon x}{q} x^{2r-1} \right)dx.
    \end{align*}

    Since $\|G_{r,q}\|_{\infty} = O_{r}(Q^{2r-1})$ and $\int_{1}^{Q} |G_{r,q}'|dq = O_{r}(Q^{2r-1})$, taking the outer sum and applying Lemma \ref{lem:lemma 2.3} give us
    \begin{align}
        A_{r,J,\varepsilon} &= \frac{Y_{r,L}}{2^{r}} \sum_{q=1}^{Q} \frac{\varphi(q)}{q}G_{r,q} + O_{\alpha,r,\delta} \left( Q^{2 \alpha +2r-\frac{1}{2}+\delta} +|L|Q^{-\alpha+2r}\right)
        \notag\\
        &= \frac{Y_{r,L}}{2^{r}\zeta(2)}\int \limits_{1}^{Q}G_{r,q} dq + O_{\alpha,r,\delta} \left( Q^{2 \alpha +2r-\frac{1}{2}+\delta} +|L|Q^{-\alpha+2r}\right).\label{eq:A(r,J,ep)}
    \end{align}
    
    Using the change of variables $q=Qx$ and the definition of $G_{r,q}$, we have
    \begin{align}\label{eq:messy integral}
        \int \limits_{1}^{Q} G_{r,q} dq = Q^{2r} \int \limits_{0}^{1} \bigg(x^{2r-1}( 1- \max\{1-x,x\}) + \frac{1- \max\{1-x,x\}^{2r}}{2rx}
        \\
        \notag
        -\frac{1- \max\{1-x,x\}^{2r+1}}{(2r+1)x} \bigg)dx + O_{r}(Q^{2r-1}).
    \end{align}

Define $C_{r}$ as 
    \begin{align}
        C_{r} &:= \frac{12}{\pi^{2}}\int \limits_{0}^{\frac{1}{2}}\left(x(x^{2r-1} + (1-x)^{2r-1}) + \frac{1-(1-x)^{2r}}{2rx(1-x)} - \frac{1-(1-x)^{2r+1}}{(2r+1)x(1-x)} \right).\label{def:Cr}
    \end{align}
    Then \eqref{eq:messy integral} becomes
    \begin{align}
        \int \limits_{1}^{Q} G_{r,q} dq = \frac{\pi^{2}Q^{2r}C_{r}}{12} + O_{r}(Q^{2r-1}).\label{eq:int G(r,q)dq}
    \end{align}

    Since $\zeta(2)=\pi^2/6$, \eqref{eq:A(r,J,ep)} and \eqref{eq:int G(r,q)dq} yield 
    \begin{align*}
        A_{r,J,\varepsilon} = \frac{Q^{2r}C_{r}}{2^{r+1}} \int \limits_{L} \tan^{r}(\omega) d\omega + O_{\alpha,r,\delta} \left( Q^{2 \alpha +2r-\frac{1}{2}+\delta} +|L|Q^{-\alpha+2r}\right).
    \end{align*}

Finally, by symmetry between $q$ and $q'$, we obtain $B_{r,J,\varepsilon}$ as
    \begin{align*}
        &B_{r,J,\varepsilon} = \frac{Q^{2r}C_{r}}{2^{r+1}} \int \limits_{L} \tan^{r}(\omega) d\omega + O_{\alpha,r,\delta} \left( Q^{2 \alpha +2r-\frac{1}{2}+\delta} +|L|Q^{-\alpha+2r}\right).   
    \end{align*}
    Summing $A_{r,J,\varepsilon}$ and $B_{r,J,\varepsilon}$ gives the asymptotic formula in Corollary \ref{thm:integration over angles of self-intersection count}.

    Observe that the expression for $C_r$ in \eqref{def:Cr} is equivalent to \eqref{eq:Cr final} after evaluating the integrals in \eqref{def:Cr}. Indeed, for the third integrand, we have
\[
\int \limits_{0}^{1/2} \frac{1-(1-x)^{2r}}{2rx(1-x)}dx = \frac{1}{2r}\int \limits_{0}^{1/2} \frac{1-(1-x)^{2r}}{x}dx + \frac{1}{2r}\int \limits_{0}^{1/2} \frac{1-(1-x)^{2r}}{(1-x)}dx .
\]
Subtitute $u=1-x$, we obtain
\[\frac{1}{2r}\int \limits_{0}^{1/2} \frac{1-(1-x)^{2r}}{x}dx + \frac{1}{2r}\int \limits_{0}^{1/2} \frac{1-(1-x)^{2r}}{(1-x)}dx = \frac{1}{2r}\int_{1/2}^1\frac{1-u^{2r}}{1-u}du+\frac{1}{2r}\int_{1/2}^1\frac{1-u^{2r}}{u}du.
\]
Using the geometric sum formula, 
\[
\frac{1}{2r}\int_{1/2}^1\frac{1-u^{2r}}{1-u}du = \frac{1}{2r}\int_{1/2}^1\sum_{k=0}^{2r-1} u^k du.
\]
Switching the order of integration and summation, we obtain
\[
\frac{1}{2r}\int_{1/2}^1\frac{1-u^{2r}}{1-u}du = \frac{1}{2r}\sum_{k=0}^{2r-1} \frac{1-2^{-(k+1)}}{k+1} = \frac{1}{2r}\sum_{k=1}^{2r} \frac{1-2^{-k}}{k}.
\]
The remaining integral is straightforward to evaluate, and together we have the contribution from the third integrand as
\[
\frac{1}{2r}\left(\sum_{k=1}^{2r} \frac{1-2^{-k}}{k}+\log 2-\frac{1-2^{-2r}}{2r}\right).
\]
Similar computation applies to the fourth integrand. This completes the proof of Corollary \ref{thm:integration over angles of self-intersection count}.

\end{document}